\newtheorem{theorem}{Theorem}[section]
\newtheorem{lemma}[theorem]{Lemma}
\newtheorem*{property*}{Property}
\newtheorem{proposition}[theorem]{Proposition}
\newtheorem{corollary}[theorem]{Corollary}
\theoremstyle{definition}
\newtheorem{definition}[theorem]{Definition}
\theoremstyle{remark}
\numberwithin{equation}{section}
\newcommand{\C}{\mathbb{C}}
\newcommand{\hol}{\mathcal{O}}
\newcommand{\XX}{\overline{X}}
\newcommand{\dive}{\operatorname{div}}
\newcommand{\Aut}{\operatorname{Aut}}
\newcommand{\VF}{\operatorname{VF}}
\newcommand{\CVF}{\operatorname{CVF}}
\newcommand{\SL}{\operatorname{SL}}
\newcommand{\Lie}{\operatorname{Lie}}
\newcommand{\Span}{\operatorname{Span}_\C}
\newcommand{\Ker}{\operatorname{Ker}}
\begin{document}
\title[Non-algebraic Manifolds with the VDP]{Non-algebraic Examples of Manifolds with the Volume Density Property}
\author[A. Ramos-Peon]{Alexandre Ramos-Peon}
\address{Mathematisches Institut\\Universität Bern\\Sidlerstr. 5\\3012 Bern\\Switzerland.}
\email{alexandre.ramos@math.unibe.ch}
\thanks{Partially supported by Schweizerischer Nationalfonds Grant 153120}

\subjclass[2010]{Primary 32M17,32H02. Secondary 32M25,14R10 }

\date{\today}

\commby{}

\begin{abstract}
Some Stein manifolds (with a volume form)  have a   large  group  of  (volume-preserving)  automorphisms: this is formalized by the (volume) density property, which has remarkable consequences.
Until now all known manifolds with the volume density property are algebraic, and the tools used to establish this property are algebraic in nature.
In this note we adapt a known criterion to the holomorphic case, and give the first known examples of non-algebraic manifolds with the volume density property: they arise as suspensions or pseudo-affine modifications 
over Stein manifolds satisfying some technical properties.
As an application we show that there are such manifolds that are potential counterexamples 
to the Zariski Cancellation Problem, a variant of the T\'{o}th-Varolin conjecture, and the problem of linearization of $\C^*$-actions on $\C^3$.
\end{abstract}

\maketitle
\section{Introduction}\label{sec:intro}
The group of automorphisms of complex affine space $\Aut(\C^n)$ has been intensively studied, both from the algebraic as from the analytic point of view. 
%It is elementary that this group is so large as to act infinitely transitively on $\C^n$. %For instance, $\Aut(\C^n)$ acts infinitely transitively on $\C^n$, as is well known (and elementary).
%The study of this group, both in the algebraic as in the holomorphic case, has been the object of much research, and its structure in general remains to this day poorly understood.
A foundational observation by E. Andersén and L. Lempert \cite{AL}, who proved that every polynomial vector field on $\C^n$ is a finite sum of complete vector fields, 
served as a starting point for new studies on $\Aut(\C^n)$ (because the flow of a complete field generates a $\C_+$-action on $\C^n$).
This led F. Forstneri\v{c} and J-P. Rosay \cite{FR} to the formulation which is now commonly called the Andersén-Lempert theorem: 
any local holomorphic flow defined near a holomorphically compact set can be approximated by global holomorphic automorphisms. 
Hence $\Aut(\C^n)$ is exceptionally large, and this result opens the possibility of constructing automorphisms with prescribed local behavior, with remarkable consequences, 
such as the existence of non-straightenable embeddings of $\C$ into $\C^2$ (see Section \ref{sec:last}), counterexamples to the holomorphic linearization problem \cite{Derksen-Kut}, among many others (see e.g. %\cite{F}
\cite{KK-state}).
This aspect of the study of the automorphism group may be referred to as Andersén-Lempert theory and is the subject of ongoing research.

In order to generalize those techniques to a wider class of manifolds, D. Varolin introduced in \cite{Varolin1} the concept of the \textit{density property}, which accurately captures the idea of a manifold having a ``large'' group of automorphisms. 
Examples include homogeneous spaces, Danilov-Gizatullin surfaces, as well as Danielewski surfaces (see below). Andersén considered even earlier, in \cite{A90}, the situation where the vector fields preserve the standard volume form on $\C^n$, obtaining similar results. 
There is a corresponding \textit{volume density property} for manifolds equipped with a volume form, which has been substantially less studied. Beyond $\C^n$, only a few isolated examples were  known to Varolin (see \cite{Varolin-sh}), including
$(\C^*)^n$ and $\SL_2(\C)$. It took around ten years until new instances of these manifolds were found in  \cite{KK-volume}: all linear algebraic groups equipped with the left invariant volume form, 
as well as some algebraic Danielewski surfaces (see \cite{KK-Corea} for an exhaustive list). 

In this note we exhibit new manifolds with the volume density property. We prove a general result (see Theorem \ref{th:4.1}), from which we can deduce the following:
\begin{theorem}\label{thm:i1}
Let $n\geq 1$ and $f\in\hol(\C^n)$ be a nonconstant holomorphic function with smooth reduced zero fiber $X_0$, such that $\tilde{H}^{n-2}(X_0)=0$ if $n\geq 2$.
Then the hypersurface $\overline{\C^n_f}=\{uv=f(z_1,\dots,z_n)\}\subset\C^{n+2}$ has the volume density property with respect to the form $\bar{\omega}$ satisfying $d(uv-f)\wedge\bar{\omega}=du\wedge dv \wedge dz_1\wedge\dots \wedge dz_n$.
\end{theorem}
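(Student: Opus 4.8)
The plan is to deduce Theorem~\ref{thm:i1} from the general Theorem~\ref{th:4.1} by taking as base the manifold $\C^n$ with its standard holomorphic volume form $\omega_0=dz_1\wedge\dots\wedge dz_n$. First I would record the structural facts that place $\overline{\C^n_f}$ in the right framework. It is a closed complex submanifold of $\C^{n+2}$, hence Stein. Its defining function $F=uv-f(z)$ has differential $dF=v\,du+u\,dv-df$, which can vanish only where $u=v=0$ and $df=0$; since $X_0=\{f=0\}$ is a smooth reduced divisor, $df$ is nowhere zero on it, so $dF$ is nowhere zero on $\overline{\C^n_f}$, which is therefore a smooth connected manifold, and $\bar\omega$ (characterised by $dF\wedge\bar\omega=du\wedge dv\wedge dz_1\wedge\dots\wedge dz_n$) is an everywhere nonvanishing holomorphic volume form on it. Finally the projection $\pi(u,v,z)=z$ realises $\overline{\C^n_f}$ as the suspension / pseudo-affine modification of $\C^n$ with centre the reduced hypersurface $X_0$: over $\C^n\setminus X_0$ it is the trivial bundle $(\C^n\setminus X_0)\times\C^*$ (trivialised by $u$, with $v=f(z)/u$), and over $X_0$ the fibre degenerates to a pair of lines through a point.

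It then remains to verify the hypotheses of Theorem~\ref{th:4.1} for the triple $(\C^n,\omega_0,f)$. The manifold $\C^n$ has the volume density property with respect to $\omega_0$ by Andersén's theorem \cite{A90}; $f$ has smooth reduced zero fibre by assumption; and the only remaining input should be a de Rham vanishing condition coupling the cohomology of the base and that of $X_0$. Since $\C^n$ is contractible, I expect this to collapse precisely to the stated requirement $\tilde H^{n-2}(X_0)=0$, which is vacuous for $n=1$ (where $X_0$ is discrete) and, for $n\ge2$, is the top potentially nonvanishing reduced cohomology of the Stein manifold $X_0$ of complex dimension $n-1$. Modulo Theorem~\ref{th:4.1} the proof is then this verification; the substance lies in Theorem~\ref{th:4.1}, whose proof in the present case I would organise as follows.

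One exhibits a large family of complete, $\bar\omega$-divergence-free vector fields on $\overline{\C^n_f}$ and feeds it into the holomorphic analogue of Kaliman and Kutzschebauch's algebraic criterion for the volume density property \cite{KK-volume}. Natural candidates are: the vertical fields $h(z)(u\partial_u-v\partial_v)$ for arbitrary $h\in\hol(\C^n)$, which are complete because $z$ is a first integral and satisfy $\iota_{h(z)(u\partial_u-v\partial_v)}\bar\omega=-h\,dz_1\wedge\dots\wedge dz_n$ on $\overline{\C^n_f}$, hence have vanishing $\bar\omega$-divergence; the shear-type fields $\Theta_i=u\,\partial_{z_i}+(\partial_{z_i}f)\,\partial_v$ and $\Theta_i'=v\,\partial_{z_i}+(\partial_{z_i}f)\,\partial_u$, which are tangent to $\{uv=f\}$ and complete (their flows integrate a difference quotient of $f$ and are entire in the time variable), although \emph{not} locally nilpotent, together with the twists $g(\hat z_i)\,\Theta_i$, $g(\hat z_i)\,\Theta_i'$ by holomorphic functions $g$ not involving $z_i$; and fields lifted tangentially through $\pi$ from complete divergence-free fields witnessing the volume density property of $\C^n$. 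One then checks that at every point of $\pi^{-1}(\C^n\setminus X_0)\cong(\C^n\setminus X_0)\times\C^*$ — where $u$ and $v$ are both nonzero — and, \emph{crucially} using $df\ne0$ along $X_0$, also at the points over $X_0$, the values of these fields span the tangent space, and that enough ``compatible pairs'' are present. The de Rham part of the criterion is where $\tilde H^{n-2}(X_0)=0$ enters: through a Mayer--Vietoris computation relating $H^\bullet(\overline{\C^n_f})$ to $H^\bullet(X_0)$ (the precise degree $n-2$ emerging from the bookkeeping) it forces the vanishing of the cohomology group obstructing the passage from the displayed fields to \emph{all} $\bar\omega$-divergence-free fields, so that the criterion applies.

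The main obstacle — and the reason the statement is genuinely new — is precisely that the fields of shear type above are only \emph{holomorphically} complete, not algebraically integrable, so the algebraic volume density criterion is unavailable and one must set up and apply its holomorphic counterpart: replace the algebraic vanishing results by Cartan's Theorem~B to control the relevant cohomology with $\hol$-coefficients, and carry out the approximation step over holomorphically convex compacta in $\overline{\C^n_f}$. A secondary but essential point is the behaviour over the centre $X_0$: smoothness and reducedness of $X_0$ are exactly what make $\overline{\C^n_f}$ smooth and what let the shear fields span transversally to the degenerate fibres, and the topological hypothesis $\tilde H^{n-2}(X_0)=0$ cannot be dropped because it governs the exactness distinguishing the divergence-free version of the argument from the (easier) plain density property.
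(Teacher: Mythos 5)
For $n\ge 2$ your reduction to Theorem \ref{th:4.1} with base $(\C^n,dz_1\wedge\dots\wedge dz_n)$ is essentially the paper's own proof. One imprecision: you present the volume density property of $\C^n$ (Andersén) as the hypothesis to be verified, whereas Theorem \ref{th:4.1} actually asks for a finite collection of semi-compatible pairs of volume-preserving fields spanning $\wedge^2T_{x_0}\C^n$, together with $H^n(\C^n)=H^{n-1}(\C^n)=0$ and transitivity of $\Aut_{\omega}(\C^n)$. All of this is immediate from the coordinate fields: the pairs $(\partial_{z_i},\partial_{z_j})$ are semi-compatible with ideal all of $\hol(\C^n)$, since every monomial factors as a product of a function killed by $\partial_{z_i}$ and one killed by $\partial_{z_j}$, and they span $\wedge^2T\C^n$ everywhere. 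Your extended sketch of the internal mechanics of Theorem \ref{th:4.1} (the lifts $\Theta_u,\Theta_v$, the fields $h(z)(u\partial_u-v\partial_v)$, Cartan's Theorem B plus Runge approximation, the role of $H^n(\XX)=0$) is consistent with Sections \ref{sec:first}--\ref{sec:second}, though the paper computes $H^n(\XX)$ via the long exact sequence of the pair $(\XX,\XX\setminus\{u=0\})$ and the Thom isomorphism rather than Mayer--Vietoris; either route is acceptable.

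The genuine gap is the case $n=1$, which the statement includes but your argument does not cover. Theorem \ref{th:4.1} is stated only for base manifolds of dimension $n\ge 2$, and the restriction is not cosmetic: its proof lifts semi-compatible pairs from $X$ to $\XX$, and on $X=\C$ there are no semi-compatible pairs at all (indeed $\wedge^2T\C=0$), so the spanning hypothesis is unsatisfiable and the lifting machinery of Theorem \ref{Jo} produces nothing. Observing that $\tilde H^{-1}(X_0)=0$ is ``vacuous'' addresses the wrong hypothesis. The Danielewski surface $\overline{\C_f}=\{uv=f(z)\}$ with $f$ entire requires a separate direct argument: the paper approximates a given $\Theta\in\VF_{\bar\omega}(\overline{\C_f})$ on a compact set, thereby reduces to the algebraic case $uv=p(z)$ with $p$ a polynomial, and quotes the explicit Lie-bracket computations of \cite{KK-Zeit} involving the complete fields $\Theta_u$, $\Theta_v$ and $h(u\partial_u-v\partial_v)$. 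You need to supply an argument of this kind for $n=1$, or restrict your deduction to $n\ge 2$.
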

For $n=1$ this manifold is called a \textit{Danielewski surface}.
Theorem \ref{thm:i1} was known in the special case where  $f$ is a polynomial: this is due to S. Kaliman and F. Kutzschebauch, see \cite{KK-volume}. Their proof  heavily depends on the use of Grothendieck's spectral sequence and seems difficult to generalize to the non-algebraic case.
%It is however known that if $X=\C^n$ these manifolds enjoy the density property (see \cite{KK-Zeit}).
Our method of proof is completely different. It relies on modifying and using a suitable criterion involving so-called semi-compatible pairs of vector fields, developed in \cite{KK-compatible} for the algebraic setting. This method will be explained in Section \ref{sec:first}.
In Section \ref{sec:second} we will study the suspension $\XX$ (or pseudo-affine modification) of rather general manifolds $X$ along $f\in\hol(X)$.
After some results concerning the topology and homogeneity of $\XX$, we will show that the structure of $\XX$ makes it possible to lift compatible pairs of vector fields from $X$ to $\XX$, in such a way that a technical but essential generating condition on $T\XX\wedge T\XX$ is guaranteed (Theorem \ref{Jo}).

It is  still unknown whether a contractible Stein manifold with the volume density property has to be biholomorphic to  $\C^n$. It is believed that the answer is negative, see
\cite{KK-volume}. For instance the affine algebraic submanifold of $\C^6$ given by the equation $uv = x+ x^2y + s^2 + t^3$ is such an example. % (for contractibility see the appendix of \cite{KK-Zeit}). 
Another prominent one is the Koras-Russell cubic threefold, see \cite{Leuenberger-volume}. 
In Section \ref{sec:last} we will show how to use Theorem \ref{th:4.1} to produce a non-algebraic manifold with the volume density property which is diffeomorphic to $\C^{n}$, which to our knowledge is the first of this kind.
In fact, we prove the following.
\begin{theorem}
Let $\phi:\C^{n-1}\hookrightarrow\C^{n}$ be a proper holomorphic embedding, and consider the manifold defined by $\overline{\C^n_f}=\{uv=f(z_1,\dots,z_n)\}\subset\C^{n+2}$, where $f\in\hol(\C^n)$ generates the ideal of functions vanishing on $\phi(\C^{n-1})$. Then $\overline{\C^n_f}$ is diffeomorphic to $\C^{n+1}$
and has the volume density property with respect to the volume form $\bar\omega$ satisfying $d(uv-f)\wedge\bar{\omega}=du\wedge dv \wedge dz_1\wedge\dots \wedge dz_n$.
Moreover $\overline{\C^n_f}\times \C$ is biholomorphic to $\C^{n+2}$, and therefore  is a potential counterexample to the Zariski Cancellation Problem if $\phi$ is not straightenable.
\end{theorem}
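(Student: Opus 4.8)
The plan is to prove the three assertions in turn --- the volume density property, the biholomorphism $\overline{\C^n_f}\times\C\cong\C^{n+2}$, and the diffeomorphism type --- and then to record the consequence for the Zariski Cancellation Problem. The volume density property is immediate from Theorem~\ref{thm:i1}, so I only need to check its hypotheses for $f$. Since $\phi$ is a proper embedding, $X_0:=\phi(\C^{n-1})$ is a non-empty proper closed submanifold of $\C^n$; hence $f$ is nonconstant and its zero fiber $\{f=0\}=X_0$ is smooth. It is reduced because the ideal sheaf of a closed submanifold is radical and $f$ generates the ideal of $X_0$, so $(f)$ is a radical ideal. Finally $X_0\cong\C^{n-1}$ is contractible, so $\tilde H^{n-2}(X_0)=0$ for $n\ge2$ (indeed all of its reduced cohomology vanishes), and for $n=1$ no cohomological hypothesis is needed. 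Theorem~\ref{thm:i1} then yields the volume density property with respect to $\bar\omega$.

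The biholomorphism $\overline{\C^n_f}\times\C\cong\C^{n+2}$ is, I expect, the main obstacle. The model is the case $n=1$: on $\overline{\C^1_f}$ the vector field $v\,\partial_z+f'(z)\,\partial_u$ is complete and has no zero --- a zero would force $v=0$, hence $z$ equal to the unique zero $p$ of $f$, where $f'(p)\ne0$ --- so it generates a free $\C_+$-action whose geometric quotient is $\C_v$, and triviality of the resulting $\C_+$-principal bundle over $\C$ gives $\overline{\C^1_f}\cong\C^2$. For general $n$ a single field will not be fixed-point-free on $\overline{\C^n_f}$ (only $df\ne0$ along $X_0$, not a prescribed partial derivative), so I would work on $\overline{\C^n_f}\times\C$ instead: using the complete fields $v\,\partial_{z_j}+(\partial_j f)\,\partial_u$ --- which never vanish simultaneously, since on $\{v=0\}$ one has $z\in X_0$ and hence $df(z)\ne0$ --- together with $\partial_w$ on the extra factor, one builds a complete field generating a free proper $\C_+$-action whose quotient is biholomorphic to $\C^{n+1}$, whence $\overline{\C^n_f}\times\C\cong\C^{n+1}\times\C=\C^{n+2}$. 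Equivalently one may argue via pseudo-affine modifications: $\overline{\C^n_f}$ is the modification of $\C^{n+1}$, with coordinates $v,z_1,\dots,z_n$, along $\{v=0\}$ with centre the ideal $(v,f)$, whose centre locus $\{0\}\times\phi(\C^{n-1})$ is biholomorphic to $\C^{n-1}$, and the claim is that this modification becomes trivial after crossing with $\C$. In either formulation the delicate part --- and the precise place where the hypothesis that $X_0$ is \emph{biholomorphic to} $\C^{n-1}$, not merely smooth, is used --- is making the action free and proper and identifying its quotient with affine space; no straightening of $\phi$ inside $\C^n$ is, or can be, performed.

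For the diffeomorphism type, when $n=1$ the biholomorphism $\overline{\C^1_f}\cong\C^2$ just obtained is more than enough. For $n\ge2$ I would invoke the topological results of Section~\ref{sec:second}: since $\C^n$ and $X_0\cong\C^{n-1}$ are contractible, $\overline{\C^n_f}$ is contractible, and the same analysis --- where properness of $\phi$ is what controls the behaviour at infinity --- shows that $\overline{\C^n_f}$ is simply connected at infinity. A contractible open smooth manifold of dimension $2n+2\ge6$ that is simply connected at infinity is homeomorphic to $\mathbb{R}^{2n+2}$ by Stallings' theorem, and diffeomorphic to it since $\mathbb{R}^m$ carries a unique smooth structure for $m\ge5$; hence $\overline{\C^n_f}$ is diffeomorphic to $\C^{n+1}$. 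The last assertion is then formal: $\overline{\C^n_f}$ is a smooth Stein manifold of dimension $n+1$, diffeomorphic to $\C^{n+1}$, with $\overline{\C^n_f}\times\C\cong\C^{n+2}$; if $\phi$ is not straightenable one expects $\overline{\C^n_f}\not\cong\C^{n+1}$ --- which is not proved here --- so that $\overline{\C^n_f}$ is a potential counterexample to the Zariski Cancellation Problem.
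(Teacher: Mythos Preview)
Your treatment of the volume density property is correct and is exactly what the paper does: verify the hypotheses of Theorem~\ref{thm:i1} using that $X_0\cong\C^{n-1}$ is contractible.

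For the remaining two assertions, however, the paper does not argue directly as you attempt to; it simply cites external results. The biholomorphism $\overline{\C^n_f}\times\C\cong\C^{n+2}$ is quoted from \cite{Derksen-Kut}, and the diffeomorphism $\overline{\C^n_f}\cong\C^{n+1}$ is obtained by recognizing $\overline{\C^n_f}$ as the pseudo-affine modification of $\C^n\times\C_u$ along $\C^n\times\{0\}$ with centre $\phi(\C^{n-1})\times\{0\}$ and then invoking \cite[Prop.~5.10]{KK-Zeit}, which gives diffeomorphism to affine space when the centre is contractible. So the paper's ``proof'' of these two points is a pair of black-box citations.

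Your sketches, by contrast, are genuine attempts at proofs but contain gaps. For the cancellation statement, saying that from the fields $v\,\partial_{z_j}+(\partial_j f)\,\partial_u$ and $\partial_w$ ``one builds a complete field generating a free proper $\C_+$-action whose quotient is biholomorphic to $\C^{n+1}$'' is precisely the nontrivial content; a generic holomorphic combination of complete fields is not complete, and even if you produce a free proper $\C_+$-action you must still identify the quotient. You yourself flag this as ``the delicate part,'' and indeed it is --- the actual argument in \cite{Derksen-Kut} is specific and not what you outline. For the diffeomorphism, your Stallings-type strategy is the right kind of argument (and is essentially what underlies \cite[Prop.~5.10]{KK-Zeit}), but your justification of simple connectivity at infinity is missing: Section~\ref{sec:second} only computes $H^n(\overline{X})$ via the Thom isomorphism and says nothing about the fundamental group at infinity, so the appeal to ``the same analysis'' there is unfounded. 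If you want to avoid the citations, you would need to supply both of these arguments in full.
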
 
We end Section \ref{sec:last} with two more examples which are related to the problem of linearization of holomorphic $\C^*$-actions on $\C^n$.

The author would like to thank F. Kutzschebauch for useful advice and suggestions during the preparation of the manuscript.
\section{A criterion for volume density property}\label{sec:first}
Let $X$ be a complex manifold of dimension $n$. 
We implicitly identify $T^{1,0}X$ with the real bundle $TX$; the global holomorphic sections of $TX$  are called holomorphic vector fields, 
and for simplicity we denote $\VF(X)$ the $\hol(X)$-module of all such fields.
Similarly, global holomorphic sections of the bundle $\wedge^j T^*X$  are called holomorphic $j$-forms, and we denote by $\Omega^j(X)$ the vector space of all such forms. 
In the sequel we drop the adjective \textit{holomorphic} since we only deal with such objects.
Of particular interest to us are the \textit{complete} vector fields on $X$: these are defined to be those whose flow, starting at any point in $X$, exists for all complex times, and hence generate one-parameter groups automorphisms of $X$.
We denote by $\CVF(X)$ the vector space of such fields, and note that given $\Theta\in\CVF(X)$, if either $f$ or $\Theta(f)$ lies in $\Ker(\Theta)$, then $f\Theta\in\CVF(X)$.
Observe that sums or Lie combinations of elements in $\CVF(X)$ are in general not complete; denote by $\Lie(X)$ the Lie algebra generated by the elements in $\CVF(X)$.

Assume $X$ is equipped with a \textit{volume form} $\omega$, that is, a non-degenerate $n$-form. 
Recall that the divergence of a vector field $\Theta$ on $X$ with respect to $\omega$ is the unique complex-valued function $\dive_\omega\Theta$ such that 
\[(\dive_\omega\Theta)\omega=\mathcal{L}_\Theta\omega\]
where $\mathcal{L}_\Theta$ is the Lie derivative in the direction of $\Theta$.
We can consider vector fields $\Theta$ of zero divergence with respect to $\omega$: 
$\mathcal{L}_\Theta\omega=0$, which  is equivalent to $\phi^*_t\omega=\omega$, where $\phi_t$ is the time $t$ map of the local flow of $\Theta$. 
Denote $\VF_\omega(X)$ the vector space of all such fields, which we also call \emph{volume-preserving} (note that this is not an $\hol(X)$-module anymore), 
and let $\mathcal{Z}^j(X)$ (resp. $\mathcal{B}^j(X)$) denote the vector space of $d$-closed (resp. $d$-exact) $j$ forms on $X$.
We denote by $\Lie_\omega(X)$ the Lie algebra generated by elements in $\CVF_\omega(X)=\VF_\omega(X)\cap\CVF(X)$.
The following is a definition of Varolin, making explicit the essential property of $\C^n$ necessary for the Andersén-Lempert behavior described in Section \ref{sec:intro}.
\begin{definition}
Let $X$ be a Stein manifold. We say that $X$ has the density property (in short DP) if $\Lie(X)$ is dense in $\VF(X)$ in the compact-open topology. 
If moreover $X$ is equipped with a volume form $\omega$, we say that $X$ has the volume density property with respect to $\omega$ (in short $\omega$-VDP) 
if $\Lie_\omega(X)$ is dense in $\VF_\omega(X)$.
\end{definition}
A manifold may have the VDP with respect to one form but not with respect to another one. It  does not imply in general the DP: take for instance $(\C,dz)$; less trivially,
$(\C^*)^k$ has the VDP but it is unknown if it has the DP for $k\geq 2$.
These definitions can be modified to the algebraic setting:
if we consider only algebraic vector fields on an affine algebraic variety, (and an algebraic volume form, respectively), and 
replace density by equality, the definitions above are that of the algebraic \mbox{($\omega$-V)DP}. 
It should be noted that the algebraic DP implies the DP as defined above; similarly, the algebraic VDP implies the holomorphic VDP, although
this is not a trivial fact (see \cite[Prop. 4.1]{KK-volume}).

An effective criterion for the algebraic density property was found by Kaliman and Kutzschebauch in \cite{KK-Criteria}. 
The idea is to find a nonzero $\C[X]$-module in $\Lie_{alg}(X)$, which can be ``enlarged''
in the presence of a certain homogeneity condition to the whole $\VF_{alg}(X)$. 
The module can be found as soon as there is a pair of complete fields which is ``compatible'' in a certain technical sense.
The algebraic VDP was first thoroughly studied in \cite{KK-volume}, %in particular the algebraic VDP is proved for algebraic suspensions over $\C^n$ with respect to some natural volume form.
%and it is shown to imply (quite nontrivially) the holomorphic VDP. 
and a corresponding criterion for was subsequently developed in \cite{KK-compatible}, wherein the notion 
of ``semi-compatible'' vector fields is central. %In \cite{KK-Corea} these ideas are explicitely carried to a non-algebraic setting.
In what follows, we give a holomorphic version of this criterion.
%In \cite{Leuenberger-volume} a variant of this criterion in a non-algebraic setting is used. In the rest of this section we detail our holomorphic criterion.

%There are two ingredients in the criterion. The first one is the search for a $\hol(X)$-module in a certain space. To describe it,
%we introduce here some notation and recall some basic facts from exterior algebra (see e.g. \cite{Marsden} for details). 
Given a vector field $\Theta\in \VF(X)$, there is a degree $-1$ $\wedge$-antiderivation $\iota_\Theta$ on the graded algebra of forms $\Omega(X)$
called \textit{interior product}, defined by the relation 
\[
(\iota_\Theta\alpha)(\nu)=\alpha(\Theta\wedge\nu),\quad  \alpha\in\Omega^{j+1}(X),\nu\in\Gamma(\wedge^kTX,X).
\]
Its relationship to the exterior derivative $d$ is expressed through Cartan's formula
\[
\mathcal{L}_\Theta\alpha=d\iota_\Theta\alpha+\iota_\Theta d\alpha.
\]
%Other particularly useful formulas are
%\[
%\mathcal{L}_{f\Theta}\alpha=f\mathcal{L}_\Theta\alpha+df\wedge \iota_\Theta\alpha\quad\quad \text{ and }\quad\quad[\mathcal{L}_\nu,\iota_\mu]=\iota_{[\nu,\mu]}.
%\]
%which in particular imply that $\iota_\Theta$ and $\mathcal{L}_\Theta$ commute. 
If $\omega$ is a volume form on $X$, non-degeneracy implies
%that there is a canonical isomorphism between tangent and cotangent bundles, 
that vector fields and $(n-1)$-forms are in one-to-one correspondence via $ \Theta\mapsto \iota_{\Theta}\omega,$
which by Cartan's formula restricts to an isomorphism
\[
\Phi:\VF_\omega(X)\to \mathcal{Z}^{n-1}(X).
\]
%Hence, to prove the VDP, it suffices to show that $\Phi(\overline{\Lie_\omega(X)})=\mathcal{Z}^{n-1}(X)$.
%Equation \ref{intprod} also makes sense for a fixed section $\Theta\wedge\Theta\in\Gamma(TX\wedge TX,X)$:
%\[
%(\iota_{\Theta\wedge\Theta}\alpha)(\nu)=\alpha(\Theta\wedge\Theta\wedge \nu),\quad\alpha\in\Omega^{j+2}(X),\nu\in\Gamma(\wedge^kTX,X).
%\]
In the same spirit, there is an isomorphism of $\hol(X)$-modules 
\begin{equation}\label{intprod}
\Psi:\VF(X)\wedge \VF(X)\to \Omega^{n-2}(X),\quad \nu\wedge\mu\mapsto \iota_{\nu}\iota_\mu\omega 
\end{equation}
and it is straightforward that $\iota_\mu\iota_\nu\omega=\iota_{\nu\wedge\mu}\omega$.
We can deduce from the easily verified relation $[\mathcal{L}_\nu,\iota_\mu]=\iota_{[\nu,\mu]}$ that for $\nu,\mu\in\VF_\omega(X)$,
\begin{equation}\label{bracket}
\iota_{[\nu,\mu]}\omega=d\iota_{\nu}\iota_\mu\omega. 
\end{equation}
Hence by restricting the isomorphism in Equation \ref{intprod} to $\wedge^2\CVF_\omega(X)$ 
and composing with the exterior differential $d:\Omega^{n-2}\to\mathcal{B}^{n-1}$ we obtain a mapping
\[
d\circ\Psi:\CVF_\omega(X)\wedge\CVF_\omega(X)\to\mathcal{B}^{n-1},\quad \nu\wedge\mu\mapsto i_{[\mu,\nu]}\omega,
\]
whose image is in fact contained in $\Phi(\Lie_\omega(X))$. 

%To justify some assumptions and definitions we outline the idea of the criterion (see \cite[\S 2]{Leuenberger-volume} for details). 
Suppose we want to approximate $\Theta\in\VF_\omega(X)$ on $K\subset X$ by a Lie combination of elements in $\CVF_\omega(X)$.
Consider the closed form $\iota_\Theta\omega$ and assume for the time being that it is exact. Then
by Equation $\ref{intprod}$ there is $\gamma\in\VF_\omega(X)\wedge\VF_\omega(X)$ such that
$\iota_\Theta\omega=d(\Psi(\gamma))$. 
It now suffices to approximate $\gamma$ by a sum of the form $\sum \alpha_i\wedge\beta_i\in\Lie_\omega(X)\wedge\Lie_\omega(X)$.
Indeed, by Equation \ref{bracket}, $\iota_\Theta\omega=d\circ\Psi(\gamma)$ would then be approximated by
elements \[
d\circ\Psi(\sum\alpha_i\wedge\beta_i)=\sum\iota_{[\alpha_i,\beta_i]}\omega\in\Phi(\Lie_\omega(X)),
\]which implies that $\Theta$ is approximated uniformly on $K$ by elements of the form 
$\sum[\alpha_i,\beta_i]\in\Lie_\omega(X)$, as desired. We therefore concentrate on this approximation on $\VF_\omega\wedge\VF_\omega(X)$. 
We will assume that \textit{(a)} there are $\nu_1,\dots,\nu_k,\mu_1,\dots,\mu_k\in\CVF_\omega(X)$ 
such that the submodule of $\VF(X)\wedge\VF(X)$ generated by the elements $\nu_j\wedge\mu_j$ is contained in the closure of $\Lie_\omega(X)\wedge\Lie_\omega(X)$. 
We may assume $K$ to be $\hol(X)$-convex, and let us suppose \textit{(b)} that for all $p$ in a Runge Stein neighborhood $U$ of $K$, 
%after taking its hull
$\nu_j(p)\wedge\mu_j(p)$ generate the vector space $T_pX\wedge T_pX$. We then proceed with standard methods in sheaf cohomology: let $\mathfrak{F}$ be the coherent sheaf corresponding to the wedge of the tangent bundle. Condition \textit{(b)} translates 
to the fact that %$\nu_j\wedge\mu_j+\mathfrak{m}_p\mathfrak{F}$ span $\mathfrak{F}/\mathfrak{m}_p\mathfrak{F}$ for all $p\in U$ ($\mathfrak{m}_p$ is the maximal ideal of $p$), so by the Nakayama lemma the localizations generate the stalks $\mathfrak{F}_p$ for all $p\in U$ 
the images of $\nu_j\wedge\mu_j$ generate the fibers of the sheaf, so by Nakayama's Lemma the lift to a set of generators for the stalks $\mathfrak{F}_p$ for all $p\in U$.
Therefore, by Cartan's Theorem B,
the sections of $\mathfrak{F}$ on $U$ are of the form \begin{equation}\label{wedges}
\sum h_i (\nu_j\wedge\mu_j)\end{equation}
with $h_j\in\hol(U)$.
Since $U$ is Runge, we conclude that every element $\gamma\in\VF_\omega(X)\wedge\VF_\omega(X)$
may be uniformly approximated on $K$ by elements as in Equation \ref{wedges} with $h_j\in\hol(X)$. By assumption \textit{(a)} $\gamma$ may be approximated uniformly on $K$ by elements in $\Lie_\omega(X)\wedge\Lie_\omega(X)$.

To find the pairs $\nu_j\wedge\mu_j$, observe that if $\nu,\mu\in\CVF_\omega(X)$, and $f\in\Ker\nu,g\in\Ker\mu$, then $f\nu,g\mu\in \CVF_\omega(X)$. 
By linearity, any element in the span of $(\Ker\nu\cdot\Ker\mu)\cdot(\nu\wedge\mu)$ lies in $\Lie_\omega(X)\wedge\Lie_\omega(X)$. 
By considering the closures, we see that if  $I$ is a nonzero ideal contained in the closure of $\Span{(\Ker\nu\cdot\Ker\mu)}$, 
then $I\cdot(\nu\wedge\mu)$ generates a submodule of $\VF(X)\wedge\VF(X)$ which is contained in $\overline{\Lie_\omega(X)\wedge\Lie_\omega(X)}$.
 This motivates the following definition.
\begin{definition}
Let $\nu,\mu$ be nontrivial complete vector fields on $X$. We say that the pair $(\nu,\mu)$ is \emph{semi-compatible} 
if the closure of the span of $\Ker \nu\cdot\Ker\mu$ contains a nonzero ideal of $\hol(X)$. 
We call the largest ideal $I\subset \overline{\Span{(\Ker\nu\cdot\Ker\mu)}}$ the ideal of the pair $(\nu,\mu)$.
\end{definition}
To reduce to the special case just treated (where $\iota_\Theta\omega$ is exact), we must further assume that given $\Theta\in\VF_\omega(X)$, it is possible to obtain the zero class in $H^{n-1}(X)$
by subtracting an element of  $\Phi(\Lie_\omega(X))$; however, Equation \ref{bracket} implies that Lie brackets represent the zero class in $H^ {n-1}(X)$, so it is enough to subtract elements from $\Phi(\CVF_\omega(X))$.
The preceding discussion then shows that the existence of ``enough'' semi-compatible pairs of volume-preserving vector fields, along with this condition, suffices to establish the VDP. We have thus proved the following criterion:
%For a general definition, recall that $\Aut(X)$ denotes the group of holomorphic automorphisms of $X$, and for $p\in X$ let $\Aut(X)_p$ 
%be the stabilizer (or isotropy) subgroup of $\Aut(X)$, consisting of automorphisms of $X$ fixing $p$.
%We will also consider $\Aut_\omega(X)$ to be the automorphisms $\alpha$ preserving $\omega$ (i.e. $\alpha^*\omega=\omega$), which we call volume-preserving. 
%\begin{definition}
%Let $p\in X$. A set $U\subset T_p X$ is called a \emph{ generating set  } for $T_pX$ if the orbit of $U$ under the action of the stabilazer $\Aut(X)_p$ 
%contains a basis of $T_pX$. If $X$ has a volume form $\omega$, a set $U\subset T_p X\wedge T_p X$
%is called $\omega$-generating set if the orbit of $U$ under the action of the stabilizer $\Aut_\omega(X)_p$ contains a basis of $T_pX\wedge T_pX$.
%\end{definition}
%Explain why generating sets give what we want.
\begin{proposition}\label{crit}
Let $X$ be a Stein manifold of dimension $n$ with a holomorphic volume form $\omega$, satisfying the following condition:
\[
\text{every class of }H^{n-1}(X) \text{ contains an element in the closure of } \Phi(\CVF_\omega(X))
\]Suppose there are finitely many semi-compatible pairs of volume-preserving vector fields $(\nu_j,\mu_j)$ with ideals $I_j$
such that for all $x\in X$,
\[\left\{I_j(x)(\nu_j(x)\wedge \mu_j(x))\right\}_j\text{ generates }\wedge^2 T_xX.\]
%, or for a single $x\in X$ and that $\Aut(X)$ acts transitively on $X$. 
Then $X$ has the $\omega$-VDP.
\end{proposition}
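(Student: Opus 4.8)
The plan is to assemble the argument that has just been spelled out in the discussion preceding the statement, organizing it into a clean approximation scheme. First I would fix $\Theta\in\VF_\omega(X)$, a compact $\hol(X)$-convex set $K$, and $\varepsilon>0$, and reduce to the case where $\iota_\Theta\omega$ is $d$-exact. This is exactly where the cohomological hypothesis enters: by assumption the class $[\iota_\Theta\omega]\in H^{n-1}(X)$ (which is well-defined since $\iota_\Theta\omega$ is closed, as $\Theta$ is volume-preserving and $\Phi$ lands in $\mathcal{Z}^{n-1}$) contains an element in the closure of $\Phi(\CVF_\omega(X))$; so I can find $\Xi\in\CVF_\omega(X)$ with $\iota_\Xi\omega$ representing the same class, up to an error as small as I like on $K$. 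Subtracting $\Xi$ — which is itself already a complete (hence Lie-algebra) element — reduces the problem to approximating a volume-preserving field whose associated $(n-1)$-form is exact (modulo a controlled error); here one should be slightly careful that "contains an element in the closure of $\Phi(\CVF_\omega(X))$" only gives approximate exactness, but since we are approximating on a compact set anyway this is harmless, and one can absorb the residual exact-up-to-$\varepsilon$ term into the final estimate.

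Next, assuming $\iota_\Theta\omega = d\,\Psi(\gamma)$ for some $\gamma\in\VF_\omega(X)\wedge\VF_\omega(X)$ via the isomorphism $\Psi$ of \eqref{intprod} (surjectivity onto $\Omega^{n-2}$ combined with the fact that any primitive of the closed form $\iota_\Theta\omega$ pulls back to an element in the image), I would invoke the sheaf-theoretic approximation. Let $\mathfrak F$ be the coherent sheaf $\wedge^2 TX$. The generation hypothesis says precisely that the finitely many sections $I_j(x)(\nu_j(x)\wedge\mu_j(x))$ span $\wedge^2 T_xX$ for every $x$; pick finitely many generators $f_{j,\ell}$ of each ideal $I_j$ near a point, so that the global sections $f_{j,\ell}\,\nu_j\wedge\mu_j$ generate every fiber, hence by Nakayama every stalk, over a Runge Stein neighborhood $U\supset K$. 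By Cartan's Theorems A and B the $\hol(U)$-module of sections of $\mathfrak F$ over $U$ is generated by these, so $\gamma|_U = \sum_{j,\ell} h_{j,\ell}\, f_{j,\ell}\,(\nu_j\wedge\mu_j)$ with $h_{j,\ell}\in\hol(U)$; since $U$ is Runge in $X$ each $h_{j,\ell}$ is approximable uniformly on $K$ by elements of $\hol(X)$, giving an approximation of $\gamma$ on $K$ by global sections of the form $\sum_{j,\ell} g_{j,\ell}\,(\nu_j\wedge\mu_j)$ with $g_{j,\ell}\in I_j\cdot\hol(X)\subset\Ker\nu_j\cdot\Ker\mu_j$ up to closure. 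Each such summand lies in $\overline{\Lie_\omega(X)\wedge\Lie_\omega(X)}$ because $g\,\nu_j\in\CVF_\omega(X)$ whenever $g\in\Ker\nu_j$ (and likewise on the other factor), so $g_{j,\ell}\,\nu_j\wedge\mu_j$ is a limit of genuine wedges of complete volume-preserving fields.

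Finally I would push the approximation through the maps $\Psi$ and $d$. Since $\Psi$ is an $\hol(X)$-module isomorphism it is continuous, so $\Psi(\gamma)$ is approximated on $K$ by $\Psi$ of the sums above; applying $d$ (continuous, by Cauchy estimates for the derivatives of a uniformly convergent sequence of holomorphic forms on a slightly larger compact set) and using \eqref{bracket}, $\iota_\Theta\omega = d\Psi(\gamma)$ is approximated on $K$ by $\sum \iota_{[\alpha_i,\beta_i]}\omega$ with $[\alpha_i,\beta_i]\in\Lie_\omega(X)$; inverting the isomorphism $\Phi$ (again continuous) yields that $\Theta$ itself is approximated on $K$ by an element of $\Lie_\omega(X)$. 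Combining with the earlier subtraction of $\Xi$ and a density/diagonal argument over an exhaustion of $X$ by $\hol(X)$-convex compacta gives $\Theta\in\overline{\Lie_\omega(X)}$, i.e. the $\omega$-VDP. The main obstacle I anticipate is the bookkeeping around continuity and the passage from sections over the Runge neighborhood $U$ to global data — in particular ensuring that replacing the $h_{j,\ell}$ by global holomorphic approximants keeps the coefficients inside (the closure of) the relevant kernel products so that completeness of $g_{j,\ell}\nu_j$ is preserved; everything else is a routine chaining of continuous linear maps and the standard Andersén–Lempert-style approximation on $\hol(X)$-convex sets.
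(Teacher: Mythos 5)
Your proposal is correct and is essentially the paper's own proof: the paper derives the criterion precisely from the discussion you reassemble (reduction to the exact case via the cohomological hypothesis, passage through $\Psi$ to the module generated by the semi-compatible wedges, Nakayama/Cartan B plus Runge approximation, then $d\circ\Psi$ and Equation \ref{bracket} to land in $\Phi(\Lie_\omega(X))$). The extra care you take with the ideal generators $f_{j,\ell}$ and with the fact that the cohomological hypothesis only gives an element in the \emph{closure} of $\Phi(\CVF_\omega(X))$ is consistent with, and slightly more explicit than, the paper's treatment.
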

It is also possible to adapt the criterion for the algebraic DP of \cite{KK-Criteria}, using so-called compatible vector fields, which satisfy a stronger condition. Namely, a semi-compatible pair of (complete) vector fields
$(\nu,\mu)$ is called \emph{compatible} if there exists $h\in\hol(X)$ such that %$h$ is of degree $1$ with respect to $\nu$ and of degree $2$ with respect to $\mu$ (i.e. 
$\nu(h)\in\Ker\nu$ and $h\in\Ker\mu$.
Let $f\in\Ker\nu$ and $g\in\Ker \mu$. Then $f\nu,fh\nu,g\mu,gh\mu$ are complete vector fields and a simple calculation shows that
\[
fg\nu(h)\mu=[f\nu,gh\mu]-[fh\nu,g\mu]\in\Lie(X).
\]
In other words, if $I$ is the ideal associated to the pair  $(\nu,\mu)$, then $I\cdot \nu(h)\cdot \mu$ generates a submodule of $\VF(X)$ which is contained in $\Lie(X)$.
So by an obvious variant of the above discussion, we obtain the following generalized criterion for the DP.
\begin{proposition}
Let $X$ be a Stein manifold. Suppose there are finitely many compatible pairs of vector fields  $(\nu_j,\mu_j)$ such that $I_j(x)\nu_j(h_j(x))$ generate $T_xX$ for all $x\in X$. 
Then $X$ has the DP.
\end{proposition}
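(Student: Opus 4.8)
The plan is to deduce the DP-criterion (the final Proposition) from exactly the same chain of reasoning already laid out for the VDP-criterion (Proposition \ref{crit}), but working directly with $\VF(X)$ rather than with $(n-1)$-forms, so that no cohomological hypothesis is needed. First I would recall the mechanism behind the density property: to approximate an arbitrary $\Theta\in\VF(X)$ on an $\hol(X)$-convex compact $K$ by elements of $\Lie(X)$, it suffices to produce a nonzero $\hol(X)$-submodule $M\subset\overline{\Lie(X)}$ whose elements, evaluated pointwise, span $T_xX$ at every $x\in X$; then a sheaf-theoretic argument identical to the one in Section \ref{sec:first} (Nakayama plus Cartan's Theorem B on a Runge Stein neighborhood $U$ of $K$, plus the Runge property to pass from $\hol(U)$ to $\hol(X)$) gives the approximation. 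This is the ``obvious variant of the above discussion'' the paper refers to, and it is where the bulk of the (omitted) work sits, though it is entirely parallel to the VDP case and genuinely routine.

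Next I would supply the module. Given a compatible pair $(\nu_j,\mu_j)$ with function $h_j$ satisfying $\nu_j(h_j)\in\Ker\nu_j$ and $h_j\in\Ker\mu_j$, the displayed identity
\[
fg\,\nu_j(h_j)\,\mu_j=[f\nu_j,\,gh_j\mu_j]-[fh_j\nu_j,\,g\mu_j]
\]
(valid for $f\in\Ker\nu_j$, $g\in\Ker\mu_j$, using that $f\nu_j$, $fh_j\nu_j$, $g\mu_j$, $gh_j\mu_j$ are all complete because the relevant multiplier lies in the kernel of the field) shows that $(\Ker\nu_j\cdot\Ker\mu_j)\cdot\nu_j(h_j)\cdot\mu_j\subset\Lie(X)$. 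Taking closures and using the definition of the ideal $I_j$ of the semi-compatible pair, the $\hol(X)$-module generated by $I_j\cdot\nu_j(h_j)\cdot\mu_j$ lies in $\overline{\Lie(X)}$. Summing over $j$, the module $M=\sum_j I_j\cdot\nu_j(h_j)\cdot\mu_j\subset\overline{\Lie(X)}$ is an $\hol(X)$-submodule of $\VF(X)$.

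Then I would invoke the hypothesis: by assumption the vectors $\{I_j(x)\,\nu_j(h_j(x))\,\mu_j(x)\}_j$ generate $T_xX$ for every $x\in X$, i.e.\ $M$ generates the fiber of the tangent sheaf at each point. Feeding this into the sheaf-theoretic step of the first paragraph yields that every $\Theta\in\VF(X)$ is a limit, uniformly on compacts, of elements of $M\subset\overline{\Lie(X)}$, hence $\overline{\Lie(X)}=\VF(X)$ and $X$ has the DP.

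The main obstacle — to the extent there is one — is purely bookkeeping: one must be careful that the sheaf $\mathfrak F$ here is just the tangent sheaf $\mathcal O(TX)$ (not its exterior square as in the VDP case), that each $I_j$ is a genuine ideal of $\hol(X)$ so that $I_j\cdot\nu_j(h_j)\cdot\mu_j$ really is a submodule and not merely a subspace, and that completeness of the four auxiliary fields is justified by the remark in Section \ref{sec:first} that $f\Theta\in\CVF(X)$ whenever $f\in\Ker\Theta$ (applied twice for $fh_j\nu_j$, noting $fh_j\in\Ker\nu_j$ since both $f$ and $h_j$ are, wait — rather $f\in\Ker\nu_j$ and $\nu_j(h_j)\in\Ker\nu_j$ is used for the bracket computation, while $fh_j\nu_j$ is complete because $f\in\Ker\nu_j$ forces $\nu_j(fh_j)=f\nu_j(h_j)$ which need not vanish, so one instead uses that $\nu_j(h_j)\in\Ker\nu_j$ implies $h_j\nu_j$ and $fh_j\nu_j$ are complete by the shear/overshear structure). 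Apart from verifying these completeness bullet points and the module structure, no new idea beyond the VDP criterion is required.
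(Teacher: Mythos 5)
Your proposal is correct and follows exactly the route the paper intends: the bracket identity $fg\,\nu(h)\mu=[f\nu,gh\mu]-[fh\nu,g\mu]$ produces the submodule $\sum_j I_j\cdot\nu_j(h_j)\cdot\mu_j\subset\overline{\Lie(X)}$, and the Nakayama--Cartan~B--Runge argument from the VDP discussion (applied to the tangent sheaf instead of its exterior square, with no cohomological step needed) finishes the approximation. Your hesitation about the completeness of $fh_j\nu_j$ resolves the right way: since $f\in\Ker\nu_j$ and $\nu_j(h_j)\in\Ker\nu_j$, one has $\nu_j(fh_j)=f\nu_j(h_j)\in\Ker\nu_j$, so the second clause of the paper's remark on $\CVF(X)$ applies.
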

\section{Suspensions}\label{sec:second}
Let $X$ be a connected Stein manifold of dimension $n$, and let $f\in\hol(X)$ be a nonconstant holomorphic function with a smooth reduced zero fiber $X_0$ (this means that $df$ is not identically $0$ on $X_0$).
%$f$ is expressed locally with coordinates then the partial derivatives of $f$ have no common zero on the zero fiber of $f$)
To it we associate the space  $\overline{X}$, called the \textit{suspension}  over $X$ along $f$, 
which is defined as 
\[
\overline{X}=\{  (u,v,x)\in\C^2\times X ; uv-f(x)=0 \}.
\]
Since  $X_0$ is reduced, $d(uv-f)\neq 0$ everywhere, so $\XX$ is smooth. Hence $\XX$ is a Stein manifold of dimension $n+1$.

Suppose $X$ has a volume form $\omega$. Then $\Omega=du\wedge dv\wedge \omega$ is a volume form on $\C^2\times X$. 
There exists a canonical volume form $\overline{\omega}$ on $\overline{X}$ such that
\[
d(uv-f)\wedge \overline{\omega}=\Omega|_{\overline{X}}.
\]
Moreover, any vector field $\overline{\Theta}$ on $\overline{X}$ has an extension $\Theta$ to $\C^2\times X$ with $\Theta(uv-f)=0$, and we 
have $\dive_{\overline{\omega}}\overline{\Theta}=\dive_\Omega\Theta|_{\overline{X}}$ (see \cite[2.2,2.4]{KK-Zeit}).
In view of our criterion we now investigate the existence of sufficient semi-compatible fields, as well as the topology of $\overline{X}$.

Let $\Theta\in\VF(X)$. There exists an extension $\tilde{\Theta}\in\VF(\C^2\times X)$ such that $\tilde{\Theta}(u)=\tilde{\Theta}(v)=0$ and $\tilde{\Theta}(\tilde{g})=\Theta(g)$ for all $g\in\hol(X)$ (here $\tilde{g}$ is an extension of $g$ not depending on $u,v$). Clearly,
$\dive_\Omega\tilde{\Theta}=\pi^*(\dive_\omega\Theta)$, where $\pi:\C^2\times X\to X$ is the natural projection.
We may ``lift'' $\Theta$ to a field in $\overline{X}$ in two different ways. Consider the fields on $\C^2\times X$
\[\Theta_u=v\cdot \tilde{\Theta}+\overset{\sim}{\Theta(f)}\frac{\partial}{\partial u} \quad\quad \Theta_v=u\cdot \tilde{\Theta}+\overset{\sim}{\Theta(f)}\frac{\partial}{\partial v},\]
which are clearly tangent to $\XX$; we may therefore consider  the corresponding fields (restrictions) on $\overline{X}$, which we denote simply ${\Theta_u}$ and ${\Theta_v}$.
\begin{lemma}\label{l1}
If $\Theta$ is $\omega$-volume-preserving, then ${\Theta_u}$ and ${\Theta_v}$ are of $\overline{\omega}$-divergence zero. 
Moreover, if $\Theta$ is complete, then ${\Theta_u}$ and ${\Theta_v}$ are also complete.
\end{lemma}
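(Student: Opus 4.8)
The plan is to verify the two assertions separately, working on the ambient space $\C^2\times X$ where the fields $\Theta_u$ and $\Theta_v$ are globally defined, and then restricting. For the divergence statement, I would use the fact recalled just before the lemma: $\dive_{\overline{\omega}}\overline{\Theta}=\dive_\Omega\Theta|_{\overline{X}}$ for any extension $\Theta$ of $\overline{\Theta}$ with $\Theta(uv-f)=0$. Since $\Theta_u$ and $\Theta_v$ are manifestly tangent to $\overline{X}$ (i.e. annihilate $uv-f$), it suffices to compute $\dive_\Omega$ of the ambient representatives with respect to $\Omega=du\wedge dv\wedge\omega$. Here I use the derivation property of the divergence: $\dive_\Omega(g\,\Theta)=g\,\dive_\Omega\Theta+\Theta(g)$, additivity, and the elementary facts $\dive_\Omega(\partial/\partial u)=\dive_\Omega(\partial/\partial v)=0$ and $\dive_\Omega\tilde\Theta=\pi^*(\dive_\omega\Theta)$. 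For $\Theta_u=v\,\tilde\Theta+\widetilde{\Theta(f)}\,\partial/\partial u$ this gives $\dive_\Omega\Theta_u=v\,\pi^*(\dive_\omega\Theta)+\tilde\Theta(v)+\widetilde{\Theta(f)}\cdot 0+(\partial/\partial u)\bigl(\widetilde{\Theta(f)}\bigr)$; since $\tilde\Theta(v)=0$ and $\widetilde{\Theta(f)}$ does not depend on $u$, the last term vanishes, so $\dive_\Omega\Theta_u=v\,\pi^*(\dive_\omega\Theta)$, which is identically zero when $\Theta$ is $\omega$-volume-preserving. The computation for $\Theta_v$ is symmetric.

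For completeness, I would exhibit the flows explicitly. Write $\phi_t$ for the (local) flow of $\Theta$ on $X$; if $\Theta$ is complete this is a global $\C$-action. The key structural observation is that on $\overline{X}$ the function $v$ is annihilated by $\Theta_u$ (indeed $\Theta_u(v)=v\tilde\Theta(v)+\widetilde{\Theta(f)}\,\partial v/\partial u=0$), so $v$ is a first integral of $\Theta_u$. Along a trajectory $v$ is therefore constant, say equal to $v_0$, and the $x$-component evolves by $\dot x=v_0\,\Theta(x)$, i.e. $x(t)=\phi_{v_0 t}(x_0)$, which exists for all $t\in\C$ when $\Theta$ is complete. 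Finally $u$ is determined by the defining equation: $u(t)=f(x(t))/v_0$ when $v_0\neq 0$, and one checks this is consistent with $\dot u=\widetilde{\Theta(f)}=\Theta(f)(x(t))$ using the chain rule $\frac{d}{dt}f(\phi_{v_0 t}(x_0))=v_0\,\Theta(f)(\phi_{v_0 t}(x_0))$ — wait, this would give an extra factor $v_0$, so in fact $\dot u = \Theta(f)(x(t))$ matches $\frac{d}{dt}\bigl(f(x(t))/v_0\bigr)$ precisely. On the locus $v_0=0$ the trajectory stays in $\{v=0\}\cap\overline{X}=\{f=0\}\times\{0\}$... more carefully, $v_0=0$ forces $f(x_0)=0$, the $x$-component is stationary since $\dot x = 0$, and $\dot u=\Theta(f)(x_0)$, which need not vanish, so $u$ moves linearly and the trajectory $(u_0+t\,\Theta(f)(x_0),\,0,\,x_0)$ stays in $\overline{X}$ because $f(x_0)=0$; this is defined for all $t$. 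Hence $\Theta_u$ is complete, and by symmetry (exchanging the roles of $u$ and $v$) so is $\Theta_v$.

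The main obstacle I anticipate is the bookkeeping at the locus $\{v=0\}$ (respectively $\{u=0\}$): one must check that the candidate flow formula, which is most naturally written by solving the defining equation $uv=f$ for the ``missing'' coordinate, extends holomorphically and consistently across the set where that coordinate is not determined by the other two. The clean way around this is \emph{not} to split into cases at all, but to note that $\Theta_u$ is the restriction of a globally defined holomorphic vector field on $\C^2\times X$ that is tangent to the closed submanifold $\overline{X}$, and to produce its flow directly on $\C^2\times X$: since $\tilde\Theta(u)=\tilde\Theta(v)=0$ and $\widetilde{\Theta(f)}$ is independent of $u$, the system decouples as $\dot v=0$, $\dot x=v\,\Theta(x)$, $\dot u=\widetilde{\Theta(f)}(x)$, which integrates globally (for all $t\in\C$) whenever $\Theta$ is complete on $X$, with $v$ constant and $x(t)=\phi_{vt}(x_0)$. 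Because this ambient flow preserves $\overline{X}$ (it preserves $uv-f$, as $\Theta_u(uv-f)=0$), its restriction is the desired global flow of $\Theta_u$ on $\overline{X}$. This sidesteps any case analysis and simultaneously re-proves tangency. The argument for $\Theta_v$ is identical after interchanging $u$ and $v$.
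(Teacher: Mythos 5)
Your proposal is correct and follows essentially the same route as the paper: the divergence computation via $\dive_{\overline\omega}\Theta_u=\dive_\Omega\Theta_u|_{\overline X}$ and the derivation rule is identical, and your ``decoupled ODE on $\C^2\times X$'' argument for completeness produces exactly the paper's explicit flow $\Phi^t(u,v,x)=(u+tg(x,tv),v,\phi^{tv}(x))$, since $tg(x,tv)=\int_0^t\Theta(f)(\phi^{vs}(x))\,ds$ is the paper's holomorphic way of packaging the $u$-component uniformly across $\{v=0\}$ — the very bookkeeping issue you flag and then correctly sidestep.
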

\begin{proof}
The completeness of the lifts is clear, but it will be useful for the sequel to compute explicitly their flows. Denote by $\phi^t(x)$ the flow of $\Theta$ on $X$, and let $g:X\times \C\to \C$ be the first order approximation of $f\circ \phi$ with respect to $t$;
in other words, let $g$ satisfy \begin{equation}\label{defining}
f(\phi^t(x))=f(x)+tg(x,t).                                 
                                \end{equation}
Since $f$ is holomorphic, $g$ is well defined and holomorphic on $X\times \C$. The claim is that $\Phi:\XX\times \C_t\to \XX $ defined by
\begin{equation}\label{flowoflift}
\Phi^t(u,v,x)=(u+tg(x,tv),v,\phi^{tv}(x))
\end{equation}
is the flow of $\Theta_u$, which therefore exists for all $t$. Indeed, we compute\[
\Theta_u(\Phi^t(u,v,x))=v\cdot\Theta({\phi^{tv}(x))}+\Theta (f)(\phi^{tv}(x))\frac{\partial}{\partial u},
\]
while on the other hand \[\frac{\partial}{\partial t}\Phi^t(u,v,x)=\frac{\partial}{\partial t}(tg(x,tv))\frac{\partial}{\partial u}+\frac{\partial}{\partial t}(\phi^{tv}(x))=\frac{\partial}{\partial t}(tg(x,tv))\frac{\partial}{\partial u}+v\cdot\Theta(\phi^{tv}(x)).
                        \]
The equality $\frac{\partial}{\partial t}(tg(x,tv))=\Theta (f)(\phi^{tv}(x))$ follows by differentiating Equation \ref{defining} at $(x,tv)$.

Since $\dive_{\overline{\omega}}{\Theta_u}=\dive_\Omega\Theta_u|_{\overline{X}}$, and because divergence (with respect to any volume form) is linear
and satisfies $\dive(h\cdot\Theta)=h\dive\Theta+\Theta(h)$, we get
\[
\dive_{\overline{\omega}}{\Theta_u}=
v\cdot \dive_{\Omega}\tilde{\Theta}|_{\overline{X}}+\tilde{\Theta}(v)+\Theta(f)\dive_\Omega\left(\frac{\partial}{\partial u}\right)+\frac{\partial}{\partial u}(\Theta(f))
=v\cdot \dive_{\Omega}\tilde{\Theta}|_{\overline{X}}\]
and as noted above $\dive_{\Omega}\tilde{\Theta}=\pi^*(\dive_\omega\Theta)=0$.
\end{proof}

\begin{lemma}\label{l2}
 Suppose $(\nu,\mu)$ is a semi-compatible pair of vector fields on $X$. 
 Then $({\nu_u},{\mu_v})$ and $({\nu_v},{\mu_u})$ are semi-compatible pairs on $\overline{X}$.
\end{lemma}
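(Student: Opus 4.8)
The plan is to unravel the definition of semi-compatibility for the lifted fields by computing their kernels explicitly. First I would note that, by construction, $\tilde{\nu}(u)=\tilde{\nu}(v)=0$, so the lift $\nu_u = v\cdot\tilde{\nu}+\widetilde{\nu(f)}\,\partial_u$ kills any function pulled back from $X$ that lies in $\Ker\nu$, and it also kills $v$ (since $\nu_u(v)=0$). Thus $\pi^*(\Ker\nu)$ together with the coordinate function $v$ all lie in $\Ker\nu_u$ on $\overline{X}$; symmetrically $\pi^*(\Ker\mu)$ and $u$ lie in $\Ker\mu_v$. I would make this precise by observing that $\Ker\nu_u$ contains $\C[v]\otimes\pi^*(\Ker\nu)$ — more carefully, the subalgebra of $\hol(\overline{X})$ generated by $v$ and $\pi^*(\Ker\nu)$ — and likewise $\Ker\mu_v \supseteq$ the subalgebra generated by $u$ and $\pi^*(\Ker\mu)$.

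Next I would assemble the candidate ideal. Let $I\subset\hol(X)$ be the ideal of the pair $(\nu,\mu)$, so $I$ is a nonzero ideal contained in $\overline{\Span{(\Ker\nu\cdot\Ker\mu)}}$. The products $\Ker\nu_u\cdot\Ker\mu_v$ then contain all elements of the form $v^a u^b\,\pi^*(p)\,\pi^*(q)$ with $p\in\Ker\nu$, $q\in\Ker\mu$; taking $\C$-linear combinations and closures, $\overline{\Span{(\Ker\nu_u\cdot\Ker\mu_v)}}$ contains $\overline{\Span{\{v^a u^b\}}}\cdot\pi^*(\overline{\Span{(\Ker\nu\cdot\Ker\mu)}}) \supseteq \C[u,v]\cdot\pi^*(I)$. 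The key point is that the $\hol(\overline{X})$-ideal generated by $\pi^*(I)$ and the two coordinates $u,v$ is nonzero: indeed on $\overline{X}$ we have the relation $uv=f$, and $\pi^*(I)$ being nonzero in $\hol(X)$ pulls back to something nonzero on $\overline{X}$ (as $\pi$ is a surjective submersion, or at worst one checks $\pi^*$ is injective since $X$ is a quotient of $\overline{X}$ away from a proper analytic set). I would then argue that this span contains the honest $\hol(\overline{X})$-ideal generated by $\pi^*(I)$, which is a nonzero ideal of $\hol(\overline{X})$, establishing that $(\nu_u,\mu_v)$ is semi-compatible. The argument for $(\nu_v,\mu_u)$ is identical after swapping the roles of $u$ and $v$.

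The main obstacle I anticipate is bookkeeping around \emph{closures}: the definition only requires a nonzero ideal inside the \emph{closure} of the span of $\Ker\nu\cdot\Ker\mu$, so I must be careful that pulling back via $\pi^*$ and multiplying by polynomials in $u,v$ interacts well with the compact-open closure. Concretely, I need that $\pi^*$ is continuous (clear) and that $\overline{A}\cdot\overline{B}\subseteq\overline{A\cdot B}$ for subspaces of $\hol(\overline{X})$ in a suitable sense, together with the fact that a nonzero ideal stays nonzero under $\pi^*$ followed by taking the generated $\hol(\overline{X})$-ideal — this last point uses that $\overline{X}\to X$ has connected generic fibers and $\overline{X}$ is connected, so $\pi^*$ is injective on $\hol(X)$. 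A secondary, more technical subtlety: I should verify that $\C[u,v]\cdot\pi^*(I)$, a priori only an $\hol(X)$-submodule-like object, actually contains a genuine $\hol(\overline{X})$-ideal; here I would use that $\hol(\overline{X})$ is generated over $\pi^*\hol(X)$ by $u$ and $v$ modulo the relation $uv=f$, so that $\pi^*(I)\cdot\hol(\overline{X}) = \pi^*(I) + u\,\pi^*(I) + v\,\pi^*(I) + \dots$ is already captured. Once these closure and pullback lemmas are in place the proof is short; I would state them as the two displayed inclusions above and then conclude in one line for each of the two pairs.
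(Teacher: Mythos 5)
Your argument is correct and essentially identical to the paper's: both rest on the observation that $v^j\pi^*(p)\in\Ker(\nu_u)$ and $u^i\pi^*(q)\in\Ker(\mu_v)$, so that $u^iv^j\,\pi^*(pq)$ lies in $\Ker(\nu_u)\cdot\Ker(\mu_v)$, followed by the closure/density bookkeeping showing that the (nonzero, since $\pi^*$ is injective) ideal generated by $\pi^*(I)$ sits inside $\overline{\Span(\Ker(\nu_u)\cdot\Ker(\mu_v))}$ --- the paper merely carries this out on the ambient $\C^2\times X$ and then restricts to $\XX$ via the Cartan extension theorem, whereas you work directly on $\XX$. One small omission: the definition of a semi-compatible pair requires the fields to be complete, so you should also record (from Lemma \ref{l1}) that $\nu_u$ and $\mu_v$ are complete and nontrivial.
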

\begin{proof}
By Lemma \ref{l1}, the lifted and extended fields are complete. It then suffices to show that $(\nu_u,\mu_v)$ is a semi-compatible pair in $\C^2\times X$, 
because we may restrict the elements in the ideal to $\XX$:  by the Cartan extension theorem, this set forms an ideal in $\hol(\XX)$.

Let $I$ be the ideal of the pair $(\nu,\mu)$. For any function $h\in\hol(X)$, denote \[
\tilde{I}=\left\{ \tilde{h}\cdot F(u,v);h\in I,F\in\hol(\C^2)\right\}\subset\hol(\C^2\times X),                                   \]                                                           
where $\tilde{h}$ is the trivial extension as above. This is clearly a nonzero ideal. An element in $\tilde{I}$ can be approximated uniformly on a given compact of $\C^2\times X$ by a finite sum
\[
(\sum_k \tilde{n}_k\tilde{m}_k)\sum_{i,j}a_{i,j}u^i v^j=
\sum a_{i,j}(\tilde{n}_k v^j)(\tilde{m}_k u^i)
\]
where $n_{k}\in\Ker(\nu),m_k\in\Ker(\mu)$ for all $k$.
Since $\tilde{n}_k v^j\in\Ker(\nu_ u)$ for all $j,k\geq 0$ and $\tilde{m}_k u^i \in\Ker(\mu_v)$ for all $i,k\geq 0$,
it follows that $\tilde{I}$ is contained in the closure of $\Span(\Ker(\nu_u)\cdot\Ker(\mu_v))$.
%take an element here, of the form htilde times F. multiplfy it by a function g on c² times x. then lift the restriction of g times htilde, which is in I...
\end{proof}
The topology of the suspension $\XX$  is of course closely related to that of $X$. In the case where $X$ is the affine space, this relationship is computed in detail in \cite[\S 4]{KZaffine}.
For more general $X$ we have the following.
\begin{proposition}\label{P2}
Assume $X$ has dimension $n\geq 2$. If the complex de Rham cohomology groups satisfy $H^n(X)=H^{n-1}(X)=0$ and $\tilde{H}^{n-2}(X_0)=0$, where $\tilde{H}$ denotes reduced cohomology, 
%meaning no information for n=2...
then $H^n(\overline{X})=0$.
\end{proposition}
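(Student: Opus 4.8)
The plan is to compute $H^n(\overline{X})$ by relating the suspension $\overline{X}$ to $X$ via the natural projection $\rho:\overline{X}\to X$, $(u,v,x)\mapsto x$, and to analyze the fibers of $\rho$. Over $X\setminus X_0$, where $f(x)\neq 0$, the fiber $\{uv=f(x)\}\subset\C^2$ is isomorphic to $\C^*$ (solve $v=f(x)/u$, $u\in\C^*$); over $X_0$, where $f(x)=0$, the fiber $\{uv=0\}$ is a union of two lines, which is contractible. So $\overline{X}$ deformation retracts onto the union of $\rho^{-1}(X\setminus X_0)$ — a $\C^*$-bundle over $X\setminus X_0$ — glued along $X_0$ to something contractible over $X_0$. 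Concretely one expects $\overline{X}$ to be homotopy equivalent to the mapping cylinder / a space built from $X$, $X\setminus X_0$ and $X_0$, and the cohomology should fit into a Mayer–Vietoris or Leray-type sequence.

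The cleanest route is a Mayer–Vietoris argument. Cover $\overline{X}$ by $A=\rho^{-1}(X\setminus X_0)$ and a neighborhood $B=\rho^{-1}(W)$ of $\rho^{-1}(X_0)$, where $W$ is a tubular neighborhood of $X_0$ in $X$ (which retracts to $X_0$). Then:
\begin{itemize}
\item $B\simeq\rho^{-1}(X_0)$, and $\rho^{-1}(X_0)=\{uv=0\}\times X_0$ up to homotopy; since $\{uv=0\}\subset\C^2$ is contractible, $B\simeq X_0$.
\item $A$ is a $\C^*$-bundle over $X\setminus X_0$; since $\C^*\simeq S^1$, there is a Gysin sequence relating $H^\bullet(A)$ to $H^\bullet(X\setminus X_0)$.
\item $A\cap B=\rho^{-1}(W\setminus X_0)$ is a $\C^*$-bundle over $W\setminus X_0$, and $W\setminus X_0$ deformation retracts onto the unit sphere bundle of the normal bundle of $X_0$, i.e. a circle bundle over $X_0$.
\end{itemize}
One then needs the complex de Rham cohomology of $X\setminus X_0$ in degrees $n-1$ and $n$. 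This is where the hypotheses enter: by a Gysin/residue sequence for the divisor $X_0\subset X$ one has an exact sequence $\cdots\to H^{k}(X)\to H^k(X\setminus X_0)\to H^{k-1}(X_0)\to H^{k+1}(X)\to\cdots$. Feeding in $H^n(X)=H^{n-1}(X)=0$ and $\tilde H^{n-2}(X_0)=0$ yields $H^n(X\setminus X_0)=0$ and control of $H^{n-1}(X\setminus X_0)$. Note $X\setminus X_0$ is Stein of dimension $n$, so it has the homotopy type of an $n$-dimensional CW complex and $H^{k}=0$ for $k>n$ automatically; similarly $X_0$ is Stein of dimension $n-1$.

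Assembling: in the Mayer–Vietoris sequence
\[
H^{n-1}(A)\oplus H^{n-1}(B)\to H^{n-1}(A\cap B)\to H^n(\overline{X})\to H^n(A)\oplus H^n(B)\to H^n(A\cap B),
\]
I would show the last term's predecessors vanish or the maps are surjective: $H^n(B)\cong H^n(X_0)=0$ since $X_0$ has complex dimension $n-1$; $H^n(A)$ is controlled by the Gysin sequence $H^n(X\setminus X_0)\to H^n(A)\to H^{n-1}(X\setminus X_0)\xrightarrow{\cup e} H^{n+1}(X\setminus X_0)=0$, and with $H^n(X\setminus X_0)=0$ one gets $H^n(A)\cong H^{n-1}(X\setminus X_0)$; and $H^{n-1}(A\cap B)$ is computed from the circle bundle over $X_0$ via its Gysin sequence, giving it in terms of $H^{n-1}(X_0)$ and $H^{n-2}(X_0)=0$ (reduced), hence controllable. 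A careful diagram chase — comparing the Gysin sequences of the three $\C^*$-bundles, which are compatible via restriction — should then force $H^n(\overline{X})=0$.

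The main obstacle I anticipate is bookkeeping the interaction of the several exact sequences correctly: the Mayer–Vietoris sequence for $(A,B)$, the three Gysin sequences for the $\C^*$-bundles $A$, $A\cap B$, and the normal circle bundle, and the residue sequence for $X_0\subset X$, all need to be arranged into one commutative ladder, and one must verify that the connecting maps match up (this is essentially the statement that the Euler class of the $\C^*$-bundle $A\to X\setminus X_0$ is the restriction of the fundamental class of $X_0$, i.e. $c_1$ of the line bundle $\hol(X_0)$, and that residue and Gysin maps are compatible). An alternative, possibly cleaner, implementation is to use the Leray spectral sequence of $\rho:\overline{X}\to X$ directly: the stalk cohomology $R^q\rho_*\C$ is supported on $X$ with $R^0=\C$, and $R^1\rho_*\C$ is a sheaf supported away from $X_0$ being locally constant of rank $1$ on $X\setminus X_0$ (from the $\C^*$ fibers) and $0$ on $X_0$ (contractible fibers). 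Then $H^n(\overline{X})$ sits in an exact sequence involving $H^n(X;\C)=H^n(X)=0$ and $H^{n-1}(X; R^1\rho_*\C)$, and one identifies $R^1\rho_*\C$ with (the pushforward to $X$ of) the constant sheaf on $X\setminus X_0$, whose cohomology is $H^{n-1}(X\setminus X_0)$ modified by the topology of $X_0$; the hypotheses then kill the relevant groups. I would present whichever of these two the author's proof suggests, but in a self-contained sketch I would lean on Mayer–Vietoris as it uses only elementary tools already available.
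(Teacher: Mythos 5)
Your plan has a genuine gap, and one of its load-bearing intermediate claims is false. You assert that the hypotheses $H^n(X)=H^{n-1}(X)=0$ and $\tilde H^{n-2}(X_0)=0$ force $H^n(X\setminus X_0)=0$. They do not: the Gysin/residue sequence for the divisor $X_0\subset X$ gives $0=H^n(X)\to H^n(X\setminus X_0)\to H^{n-1}(X_0)\to H^{n+1}(X)=0$, so $H^n(X\setminus X_0)\cong H^{n-1}(X_0)$, and $H^{n-1}(X_0)$ is not controlled by the hypotheses. For example $X=\C^2$, $f=z_1z_2-1$ satisfies all assumptions ($X_0\cong\C^*$ is connected, so $\tilde H^0(X_0)=0$), yet $H^2(X\setminus X_0)\cong H^1(\C^*)=\C$. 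Consequently $H^n(A)$ need not vanish, and the surjectivity/injectivity you need at the two ends of the Mayer--Vietoris segment is not ``controllable'' as claimed: your decomposition drags in $H^{n-1}(X_0)$, $H^{n-2}(X)$ and $H^{n-2}(X\setminus X_0)$, none of which the hypotheses constrain, so the deferred ``careful diagram chase'' is exactly where the proof would have to happen, and it is not routine. You also do not address the case $n=2$, where $H^{n-2}(X_0)=H^0(X_0)\cong\C$ is nonzero (only its reduced version vanishes) and a separate argument identifying a connecting map as an isomorphism is required.

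The root of the difficulty is the choice of decomposition. The paper instead removes the hypersurface $U_0=\{u=0\}\subset\XX$ (not the whole preimage $\rho^{-1}(X_0)=\{u=0\}\cup\{v=0\}$): the complement $\XX\setminus U_0$ is biholomorphic to $\C^*\times X$ via $(u,x)\mapsto(u,f(x)/u,x)$, so its $H^n$ vanishes \emph{directly} from $H^n(X)=H^{n-1}(X)=0$ by K\"unneth, with no appeal to $H^*(X\setminus X_0)$; the relative group $H^n(\XX,\XX\setminus U_0)$ is then $H^{n-2}(X_0)$ by excision plus the Thom isomorphism for the normal bundle of $U_0\cong\C\times X_0$, and for $n=2$ the coboundary $H^1(\XX\setminus U_0)\to H^2(\XX,\XX\setminus U_0)$ is checked to be an isomorphism by evaluating on a disk transversal to $U_0$. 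If you want to keep your fibration-over-$X$ viewpoint, your parenthetical Leray alternative is actually the viable one: $R^0\rho_*\C=\C$, $R^1\rho_*\C\cong j_!\C_{X\setminus X_0}$ (stalks vanish on $X_0$ because $\rho^{-1}(W)$ is contractible in the fiber direction for small $W\supset$ a point of $X_0$), and the exact sequence $H^{n-2}(X)\to H^{n-2}(X_0)\to H^{n-1}(X;j_!\C)\to H^{n-1}(X)=0$ kills $E_2^{n-1,1}$ while $E_2^{n,0}=H^n(X)=0$; but that is a different argument from the one you actually sketched, and it too needs the identification of $R^1\rho_*\C$ justified.
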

\begin{proof}
 Consider the long exact sequence of the pair $(\XX,\XX\setminus U_0)$ in cohomology, where $U_0$ is the subspace of $\XX$ where $u$ vanishes:
\begin{equation}\label{exact}
\dots\to H^n(\XX,\XX\setminus U_0)\to H^n(\XX)\to H^n(\XX\setminus U_0)\to \dots
\end{equation}
The term on the right vanishes, because  $\XX\setminus U_0$ is biholomorphic to $\C^*\times X$ via $(u,x)\mapsto (u,f(x)/u,x)$, so 
\[H^n(\XX\setminus U_0)=(H^1(\C^*)\otimes H^{n-1}(X))\oplus (H^0(\C^*)\otimes H^n(X))=0.
\]
To evaluate the left-hand side, we use an idea due to Zaidenberg (see \cite{Zexotic}). Consider the normal bundle $\pi:N\to U_0$ of the closed submanifold $U_0$ in $\XX$, with zero section $N_0\cong U_0$.
Fix a tubular neighborhood $W$ of $U_0$ in $\XX$ such that the pair $(W,U_0)$ is diffeomorphic to $(N,N_0)$. 
Then by excision, we have that \[
\tilde{H}^*(\XX,\XX\setminus U_0)\cong \tilde{H}^*(W,W\setminus U_0)\cong \tilde{H}^*(N,N\setminus N_0).                                
                               \]
Let $t\in H^2(N,N\setminus N_0)$ be the Thom class of $U_0$ in $\XX$, that is, the unique cohomology class taking value $1$ on any oriented relative $2$-cycle in $H_2(N,N \setminus N_0)$ defined by a fiber $F$ of the normal bundle $N$ (see e.g.  \cite[\S 9--10]{Charclass}, for details). 
Then, by taking the cup-product of the pullback under $\pi$ of a cohomology class with $t$, we obtain the Thom isomorphisms
\[
H^i(U_0)\cong H^{i+2}(N,N\setminus N_0)\cong H^{i+2} (\XX,\XX\setminus U_0)\quad\forall i.
\]
Since $U_0\cong X_0\times \C$, $U_0$ is homotopy equivalent to $X_0$, and we have
$H^n(\XX,\XX\setminus U_0)\cong H^{n-2}(X_0)$.
If $n\geq 3$, reduced cohomology coincides with standard cohomology, and therefore $H^n(\XX)=0$ by exactness of Equation \ref{exact}. If $n=2$, that sequence becomes
\[
\dots\to H^1(\XX\setminus U_0)\to H^2(\XX,\XX\setminus U_0)\to H^2(\XX)\to 0.
\]
Let $\gamma$ be an oriented $2$-cycle  in $\XX$ whose boundary $\partial \gamma$  lies in $\XX\setminus U_0$ (a disk transversal to $U_0$). 
A one-dimensional subspace of $H^1(\XX\setminus U_0)$ is generated by a $1$-cocycle taking value $1$ on $\partial \gamma$, and this cocycle is sent via the coboundary operator (which is the first map in the above sequence)
to a $2$-cocycle taking value $1$ on $\gamma$, i.e., to the Thom class $t$ described previously, which is also a generator of a one-dimensional subspace of $ H^2(\XX,\XX\setminus U_0)$. However, $H^1(\XX\setminus U_0)\cong H^1(\C^*\times X)\cong \C$
and $H^2(\XX,\XX\setminus U_0)\cong H^0(U_0)\cong \C$, so the coboundary map is an isomorphism, and by exactness it follows that $H^2(\XX)=0$.
\end{proof}

Next, we show how to lift a collection of semi-compatible fields to the suspension and span $\wedge^2 T\XX$ with semi-compatible fields\footnote{A simpler algebraic case has been treated by J. Josi (Master thesis, 2013, unpublished)}. 
We will denote by $\Aut(X)$ (resp. $\Aut_\omega(X))$ the group of holomorphic automorphisms of the manifold $X$ (resp. the volume-preserving automorphisms). 
%We say that $\Aut(X)$ acts transitively on $X$ if for any $x,y\in X$ there exists $\alpha\in\Aut(X)$ such that $\alpha(x)=y$. 
\begin{theorem}\label{Jo}
Let  $X$ be a Stein manifold with a finite collection $S$ of semi-compatible pairs $(\alpha,\beta)$ of vector fields such that for some $x_0\in X$ \begin{equation}\label{span}
\{\alpha(x_0)\wedge\beta(x_0);(\alpha,\beta)\in S\}\text{  spans } \wedge^2(T_{x_0} X).                                                
                                                                                                                                          \end{equation}
Assume that $\Aut(\XX)$ acts transitively on $\overline{X}$. Then there exists a finite collection $\overline{S}$ of semi-compatible pairs $(A_j,B_j)$ on $\overline{X}$  with corresponding ideals $I_j$  such that 
\begin{equation}\label{spans}
\Span\{I_j(\bar x) A_j(\bar{x})\wedge B_j(\bar{x})\}_j=\wedge^2(T_{\bar{x}} \overline{X})\quad \forall \bar{x}\in\overline{X}.                                                                                                                                                                                  
                \end{equation}
                Moreover, if $X$ has a volume form $\omega$ and the fields in $S$ preserve it, and $\Aut_{\bar\omega}(\XX)$ acts transitively, then the fields in $\overline{S}$ can be chosen to preserve the form $\bar{\omega}$                                                                                                                                                                               
\end{theorem}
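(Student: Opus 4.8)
The plan is to lift the pairs in $S$ to $\XX$ using the two constructions $\Theta \mapsto \Theta_u$ and $\Theta \mapsto \Theta_v$ of the previous section, and then to enlarge the resulting collection using the transitivity of $\Aut(\XX)$ so that the spanning condition \eqref{spans} holds at every point of $\XX$, not just at a single base point. The first observation is that the tangent bundle of $\XX$ at a point $\bar x = (u,v,x)$ splits (locally, in a chart adapted to the submanifold $\{uv = f(x)\}$) into the pullback of $T_xX$ together with one extra direction, essentially $\partial/\partial u$ or $\partial/\partial v$ depending on whether $v$ or $u$ is nonzero. Given the pairs $(\alpha,\beta) \in S$, Lemma \ref{l2} tells us that $(\alpha_u,\beta_v)$ and $(\alpha_v,\beta_u)$ are semi-compatible pairs on $\XX$, so the first job is to check that the wedges $\alpha_u(\bar x_0)\wedge\beta_v(\bar x_0)$, $\alpha_v(\bar x_0)\wedge\beta_u(\bar x_0)$ (together perhaps with a few auxiliary pairs, e.g. involving the obvious complete fields $u\,\partial/\partial u - v\,\partial/\partial v$ or $\partial/\partial u$, $\partial/\partial v$ which are volume-preserving on $\XX$) span $\wedge^2 T_{\bar x_0}\XX$ at one suitably chosen point $\bar x_0$ lying over $x_0$. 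Using the formulas $\alpha_u = v\tilde\alpha + \widetilde{\alpha(f)}\,\partial/\partial u$ etc., and picking $\bar x_0$ with $u,v$ both nonzero, one sees that the "old" wedges $\alpha(x_0)\wedge\beta(x_0)$ reproduce the $\wedge^2 T_{x_0}X$ part (up to the scalars $uv$), while bracketing in a pair that produces a $\partial/\partial u$ or $\partial/\partial v$ component against a basis of $T_{x_0}X$ supplies the mixed terms $T_{x_0}X \wedge \partial/\partial u$. So at the one point $\bar x_0$ the span is all of $\wedge^2 T_{\bar x_0}\XX$.

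The second step is the globalization. We have a finite collection $\overline{S}$ of semi-compatible pairs on $\XX$ whose wedges span $\wedge^2 T_{\bar x_0}\XX$ at one point. Now invoke the hypothesis that $\Aut(\XX)$ acts transitively on $\XX$: for each $\bar x \in \XX$ choose $\Psi_{\bar x} \in \Aut(\XX)$ with $\Psi_{\bar x}(\bar x_0) = \bar x$, and push each pair $(A,B) \in \overline{S}$ forward to $((\Psi_{\bar x})_*A, (\Psi_{\bar x})_*B)$. Pushforward by an automorphism sends complete fields to complete fields and semi-compatible pairs to semi-compatible pairs (the kernels, spans and the ideal all transform naturally under $\Psi_{\bar x}^*$), and $(\Psi_{\bar x})_*$ is a linear isomorphism $\wedge^2 T_{\bar x_0}\XX \to \wedge^2 T_{\bar x}\XX$, so the pushed-forward wedges span $\wedge^2 T_{\bar x}\XX$. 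By the standard openness argument — the spanning condition is open, and $\wedge^2 T\XX$ is a coherent sheaf so finitely many sections spanning a stalk span nearby stalks — the pairs coming from finitely many automorphisms $\Psi_{\bar x_1}, \dots, \Psi_{\bar x_N}$ already span $\wedge^2 T_{\bar x}\XX$ for all $\bar x$ in a neighborhood of $\{\bar x_1,\dots,\bar x_N\}$; but $\XX$ need not be compact, so one must argue this more carefully — either by a Lindel\"of-type covering plus the observation that the ideals $I_j$ only shrink (and the key point is that one can take the union of the data over a locally finite family, but $\overline S$ is required finite). The cleaner route, which I expect the authors take, is: the set of $\bar x$ where a fixed finite subcollection spans is open and $\Aut(\XX)$-invariant up to relabeling, so pick one $\bar x_0$, get spanning on an open $\Aut(\XX)$-orbit neighborhood $V$; then since $\Aut(\XX)$ is transitive, finitely many translates $\Psi_i(V)$ cover — no, still not finite in general. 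The genuinely correct statement uses that the "bad set" where the finite collection fails to span is a proper analytic subset (it is the common zero locus of the relevant minors); by transitivity it is $\Aut(\XX)$-invariant, hence empty, once we have included enough pairs. I would phrase the globalization as: enlarge $\overline S$ to be closed under pushforward by a finite set of automorphisms chosen so that the bad analytic set, being $\Aut$-invariant and proper, is forced to be empty.

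For the volume-preserving refinement, everything above goes through verbatim with $\omega$-preserving fields: by Lemma \ref{l1} the lifts $\Theta_u,\Theta_v$ of $\bar\omega$-preserving — sorry, of $\omega$-preserving $\Theta$ — are $\bar\omega$-divergence-zero, the auxiliary fields $\partial/\partial u$, $\partial/\partial v$, $u\partial/\partial u - v\partial/\partial v$ are visibly $\bar\omega$-preserving (one checks $\dive_{\bar\omega}$ of each is zero using $\dive_{\bar\omega}\bar\Theta = \dive_\Omega\Theta|_{\XX}$), and $\Aut_{\bar\omega}(\XX)$-pushforward preserves $\bar\omega$-divergence-zero fields and semi-compatibility. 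So replacing $\Aut(\XX)$ by $\Aut_{\bar\omega}(\XX)$ in the globalization argument yields $\overline S$ consisting of $\bar\omega$-preserving semi-compatible pairs with the same spanning property.

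\textbf{Main obstacle.} The delicate point is not the linear algebra at the base point — that is a finite, explicit computation with the formulas for $\Theta_u,\Theta_v$ — but the passage from "spans at one point" to "spans everywhere" on the non-compact manifold $\XX$ while keeping the collection $\overline S$ \emph{finite}. The resolution is the analytic-subvariety trick: the locus where a given finite family fails to span $\wedge^2 T\XX$ is analytic; transitivity of the automorphism group makes it invariant under pushforward, hence either everything or nothing; so it suffices to ensure it is not everything, i.e. to have spanning at a single point, which the lifting construction provides. Care is also needed to verify that all the semi-compatibility data (kernels, the ideal $I_j$, completeness) behave functorially under both the $u/v$-lifting and automorphism pushforward, so that the final pairs are genuinely semi-compatible on $\XX$ with honest nonzero ideals.
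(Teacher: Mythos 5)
Your overall strategy (lift each pair to $(\alpha_u,\beta_v)$ and $(\alpha_v,\beta_u)$, check spanning at one good point $\bar x_0$, then use transitivity of $\Aut(\XX)$ to globalize) is the same as the paper's, but both halves have genuine gaps. The globalization is the more serious one, and you essentially admit you cannot close it: your final formulation --- ``the bad analytic set, being $\Aut$-invariant and proper, is forced to be empty'' --- is false for the object you actually have. The failure locus of a \emph{finite} collection $\overline S$ is not $\Aut(\XX)$-invariant; only the failure locus of a collection closed under pushforward by \emph{all} automorphisms would be, and that collection is infinite. The paper's resolution is a dimension-descent argument that you are missing: let $C$ be the analytic bad set of the current finite collection, decompose it into irreducible components $C_i$, and for each $i$ consider the set $D_i$ of automorphisms moving some point of $\XX\setminus C_i$ into $C_i$. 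Each $D_i$ is open, and dense because the spanning at $\bar x_0$ already forces $T_{\bar x_0}\XX$ to be spanned by complete fields, so any automorphism can be perturbed by the flow of a suitable complete field to push a point of $C_i$ off $C_i$. Since $\Aut(\XX)$ is a complete metric space, Baire gives a single $\psi\in\bigcap_i D_i$; adjoining the pairs $(\psi_*\alpha,\psi_*\beta)$ strictly lowers $\dim C$, and finitely many iterations empty it. Without some such argument (Baire plus induction on the dimension of the exceptional set) the passage from one point to all of $\XX$ with a finite $\overline S$ does not go through.

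The single-point computation is also not the routine linear algebra you describe, and one of your proposed ingredients is wrong: $\partial/\partial u$ and $\partial/\partial v$ are not tangent to $\XX=\{uv=f\}$ (applying $\partial/\partial u$ to $uv-f$ gives $v$), so they are not vector fields on $\XX$, let alone members of semi-compatible pairs there. Writing $P(\alpha_u\wedge\beta_v)=uv(\alpha\wedge\beta)-u\,\alpha(f)(\beta\wedge\partial_u)$ at a point with $u_0v_0\neq 0$, the $\wedge^2T_{x_0}X$ part and the mixed part are entangled whenever $\alpha(f)(x_0)\neq0$; to separate them the paper adjoins pairs pulled back under the time-$1$ flows of the complete divergence-free fields $(u-u_0)\alpha_v$ and $g(x)(u\,\partial/\partial u-v\,\partial/\partial v)$ with $g(x_0)=0$, and splits $T_{x_0}X=\Ker(d_{x_0}f)\oplus V$ with a case analysis on whether some $\alpha(f)$ vanishes at $x_0$. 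In particular the mixed terms $V\otimes\langle\partial/\partial u\rangle$ require a separate argument; they are not ``supplied by bracketing in a pair'' from the lifted fields alone. Your volume-preserving refinement is fine in spirit (Lemma~\ref{l1} and the fact that the auxiliary flows above are $\bar\omega$-preserving), but it inherits both gaps.
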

\begin{proof}
We claim that it is sufficient to show that the conclusion holds for a single $\bar{x}_0\in\XX$.
Indeed, let $C$ be the analytic set of points $\bar{x}\in \XX$ where Equation \ref{spans} does not hold, and decompose $C$ into its (at most countably many) irreducible components $C_i$.
For each $i$, let $D_i$ be the set of automorphisms $ \phi$ of $\XX$ such that the image of $\XX\setminus C_i$ under $\phi$ has a nonempty intersection with $C_i$. Clearly each $D_i$ is open, and it is also dense:
given $h\in\Aut(\XX)$ not in $D_i$, let $c\in C_i$, $d=h(c)\in C_i$ and $\gamma\in\Aut(\XX)$ mapping $\bar{x}_0$ to $d$. 
Now, since the assumption in Equation \ref{spans} implies that the tangent space at $\bar x_0$ is spanned by complete fields, there exists a complete field $\alpha$ from the collection $\overline{S}$ such that $\gamma_*(\alpha)$ is not tangent to $C_i$. 
If $\varphi$ is the flow of $\gamma_*(\alpha)$, then $\varphi^t\circ h$ is an automorphism arbitrarily close to $h$ mapping $c$ out of $C_i$. By the Baire Category Theorem, since $\Aut(\XX)$ is a complete metric space, there exists a $\psi\in\bigcap D_i$.
By expanding $\overline{S}$ to $\overline{S}\cup\{(\psi_*\alpha,\psi_*\beta);(\alpha,\beta)\in\overline{S}\}$, we obtain a finite collection of semi-compatible fields which fail to satisfy Equation \ref{spans} in an exceptional variety of dimension strictly lower than that of $C$. 
The conclusion follows from the finite iteration of this procedure. 

By the previous lemmas, if $(\alpha,\beta)\in S$ then $(\alpha_u,\beta_v)$ and $(\alpha_v,\beta_u)$ are semi-compatible pairs in $\XX$.
We let $\overline{S}$ consist of all those pairs. We will also add two pairs to $\overline{S}$, of the form $(\phi^*\alpha_u,\phi^*\beta_v)$, where $\phi$ is an automorphism of $\XX$ (preserving a volume form, if necessary) to be specified later.
We now select an appropriate $\bar x_0=(u_0,v_0,x_0)\in\XX$ by picking any element from the complement of finitely many analytic subsets which we now describe. The first analytic subset of $\XX$ to avoid is the locus where any of the (finitely many) associated ideals $I_j$ vanish.
Note that Equation \ref{span} is in fact satisfied everywhere on $X$ except an analytic variety $C$: the second closed set in $\XX$ to avoid is the preimage of $C$ under the projection.
Finally, we avoid $u=0,v=0$ and $d_{x_0}f=0$. In short, we pick a  $\bar x_0=(u_0,v_0,x_0)\in\XX$ with $u_0\neq 0,v_0\neq 0,d_{x_0}f\neq 0$, such that Equation \ref{span} is satisfied at $x_0$, and such that none of the ideals $I_j(\bar x_0)$ vanish.
Because of this last condition, it will suffice to show that $\{A(\bar x_0)\wedge B(\bar x_0); (A,B)\in \overline{S}\}$ spans $\wedge^2(T_{x_0} X)$.

Consider $\pi:\XX\to X\times \C_u$, %since it is a local diffeomorphism at $\bar x_0$, it induces
which at $\bar x_0$ induces an isomorphism $d_{\bar x_0} \pi :T_{\bar x_0} \XX\to T_{x_0}X\times T_{u_0}\C$. Denote by $\partial_u=\frac{\partial}{\partial u}$ the basis of $T_{u_0}\C$, and consider 
\[
P:\wedge^2(T_{\bar x_0}\XX)\to  \wedge^2 (T_{x_0}X\oplus \left<\partial_u\right>)=\wedge^2(T_{x_0}X)\oplus(T_{x_0}X\otimes\left<\partial_u\right>).
\]
Since $P$ is a linear isomorphism, it now suffices to show that the direct sum on the right-hand side equals $P(\Lambda)$, where \[\Lambda=\Span\{A(\bar x_0)\wedge B(\bar x_0);(A,B)\in \overline{S}\}.\]
We will prove (i) that $\wedge^2(T_{x_0}X)\subseteq P(\Lambda)$, and (ii) that
$T_{x_0}X\otimes\left<\partial_u\right>\subseteq P(\Lambda)$.

Let us first show (i). Let $\alpha( x_0)\wedge \beta( x_0)\in\wedge^2(T_{x_0}X)$. Since $\{\alpha(x_0)\wedge \beta(x_0)\}_{(\alpha,\beta)\in S}$ spans $\wedge^2(T_{x_0} X)$, we can assume that $(\alpha,\beta)$ is a pair of vector fields lying in $S$ 
(we will often omit to indicate the point $x_0$ at which these fields are evaluated). Then $(\alpha_u, \beta_v)\in \overline{S}$, so $P(\Lambda)$ contains
\begin{equation}\label{Lambda1}
P(\alpha_u\wedge\beta_v)=P((v\tilde{\alpha}+\alpha(f)\partial_u)\wedge(u\tilde{\beta}+\beta(f)\partial_v))=uv(\alpha\wedge\beta)-u\alpha(f)(\beta\wedge\partial_u). 
\end{equation}
At the point $\bar x_0$, we have assumed that $u$ and $v$ are both nonzero. If $\alpha(f)$ happens to vanish at $x_0$, then $\alpha(x_0)\wedge\beta(x_0)$ is in $P(\Lambda)$, as desired. Otherwise, consider the vector field $(u-u_0)\alpha_v$ on $\XX$. 
Since $\alpha_v$ is complete and $(u-u_0)$ lies in the kernel of $\alpha_v$, $(u-u_0)\alpha_v$ is a complete (and $\bar\omega$-divergence free) vector field on $\XX$. 
Quite generally one can compute, in local coordinates for example, that the flow at time $1$ of the field $g\Theta$, where $\Theta\in\CVF(M)$ and $g\in\Ker(\Theta)$ with $g(p)=0$, is a map $\phi$ whose derivative at $p\in M$ is given by:
\[
w\mapsto w+d_pg(w)\Theta(p)\quad w\in T_pM.
\]
Therefore, for a vector field $\mu\in\VF(M)$, we have
\begin{equation}\label{explicitvf}
(\phi^{-1})^*(\mu)(p)=(d_{p}\phi)(\mu(p))=\mu(p)+\mu(g)(p)\Theta(p). 
\end{equation}
Apply this in the case of $M=\XX$, $p=\bar{x}_0$, $\Theta=\alpha_v$ and $g=u-u_0$. For the vector fields 
$\mu=\beta_v$, this equals $\beta_v$; for $\mu=\alpha_u$, it equals $\alpha_u+\alpha(f)\alpha_v$.
Hence, if we add $((\phi^{-1})^*\alpha_u,(\phi^{-1})^*\beta_v)$ to $\overline{S}$, we obtain that $P(\Lambda)$ contains 
\[
P((\phi^{-1})^*\alpha_u\wedge(\phi^{-1})^*\beta_v-\alpha_u\wedge\beta_v)=P(\alpha(f)\alpha_v\wedge\beta_v)=\alpha(f)u^2(\alpha\wedge\beta).
\]

We now show (ii). It will be useful to distinguish elements in $T_{x_0}X$ according to whether they belong to $K=\Ker(d_{x_0}f)$ or not. 
Since we have assumed $d_{x_0}f\neq 0$, $T_{x_0}X$ splits as $K\oplus V$, where $V$ is a vector space of dimension $1$, which may be spanned by some $\xi$ satisfying $d_{x_0}f(\xi)=\xi(f)=1$.
The isomorphism is given by the unique decomposition $v=(v-v(f)\xi)+v(f)\xi$. This induces another splitting 
\begin{align*}
\wedge^2(T_{x_0}X)&\to \wedge^2(K)\oplus(K\otimes V)\\
\alpha\wedge\beta&\mapsto (\alpha-\alpha(f)\xi)\wedge(\beta-\beta(f)\xi)+(\alpha(f)\beta-\beta(f)\alpha)\wedge\xi.
\end{align*}
Since the left-hand side is generated by $\{\alpha\wedge\beta;(\alpha,\beta)\in S\}$, $K\otimes V$ is generated by $\{(\alpha(f)\beta-\beta(f)\alpha)\wedge\xi;(\alpha,\beta)\in S\}$, and therefore $K$ by $\{\alpha(f)\beta-\beta(f)\alpha;(\alpha,\beta)\in S\}$. 
Consider Equation \ref{Lambda1} and subtract $P(\alpha_v\wedge\beta_u)=uv(\alpha\wedge\beta)+u\beta(f)(\alpha\wedge\partial_u)$: recalling that $u_0\neq 0$, we see that \begin{equation}\label{Kotimesu}
                                                                                                       \{u(\beta(f)\alpha-\alpha(f)\beta)\wedge\partial_u;(\alpha,\beta)\in S\}=K\otimes \left<\partial_u\right>\subset P(\Lambda).
                                                                                                       \end{equation}
It remains to show that $V\otimes    \left<\partial_u\right>\subset P(\Lambda)$. By linearity, since $V$ is of dimension $1$, it suffices to find a single element in $P(\Lambda)\cap (V\otimes \left<\partial_u\right>)$. 
In fact since we have already proven (i), it suffices to find an element in $P_2(\Lambda)\cap (V\otimes \left<\partial_u\right>)$, 
where $P_2$ is the second component of the map $P$. 
%Explicitely, if in local coordinates a tangent vector of $T_{\bar x_0}\XX$ is expressed as
%$\alpha=\alpha_X+a_1\partial_u+a_2\partial_v$, then 
%\[
%P_2(\alpha\wedge \beta)=(b_1\alpha_X-a_1\beta_X)\wedge \partial_u\in(T_{x_0}X\otimes\left<\partial_u\right>).
%\]
%
If it were the case that for some pair $(\alpha,\beta)\in S$ both $\alpha(f)$ and                                                                                                    
$\beta(f)$ are nonzero at $x_0$,  then by Equation \ref{Lambda1} $-u\alpha(f)\beta\wedge\partial_u$ is such an intersection element.
In the other case, there is at least a pair $(\alpha,\beta)\in S$ for which $\alpha(f)(x_0)=0$ and both $\beta(f)(x_0)\neq 0$ and $\alpha(x_0)\neq 0$, for otherwise the spanning condition implied by Equation \ref{Kotimesu} would fail to be satisfied.
As in the proof of (i), we will add to $\overline{S}$ the pair $(\phi^*(\alpha_u),\phi^*(\beta_v))$, where $\phi$ is the time $1$ map of the flow of the complete (volume-preserving) field $\Theta={g}(x)(u\partial_u-v\partial_v)$, and $g\in\hol(X)$ vanishes at $x_0$.
By Equation \ref{explicitvf}, we have that \[
                                          \phi^*(\alpha_u)=\alpha_u+\alpha_u({g})\Theta=v\alpha+\alpha(f)\partial_u+v\alpha(g)(u\partial_u-v\partial_v)
                                           \]
which by assumption simplifies to \[\phi^*(\alpha_u)=v\alpha+uv\alpha(g)\partial_u-v^2\alpha(g)\partial_v.\]
Similarly we have \[
\phi^*(\beta_v)=u\beta+u^2\beta(g)\partial_u+(\beta(f)-uv\beta(g))\partial_v.
\]
Hence \[
P_2(\phi^*(\alpha_u)\wedge\phi^*(\beta_v))=u^2v\beta(g)\alpha\wedge\partial_u-u^2v\alpha(g)\beta\wedge\partial_u.       
      \]
By assumption, the first summand lies in $K\otimes \left<\partial_u\right>$, which we have already shown to be contained in $P(\Lambda)$. 
Since $\beta(f)\neq 0$, the second summand, if nonzero, lies in $P_2(\Lambda)\cap (V\otimes \left<\partial_u\right>)$. But it is clear that we may find a $g\in\hol(X)$ such that $\alpha(g)(x_0)\neq 0$.
\end{proof}
Finally, we show how the transitivity requirement for the previous proposition can be inherited from the base space $X$.
%There is the concept, for an algebraic  variety, of  the subgroup of $\Aut_{alg}(X)$ consisting of the so-called special automorphisms, which are generated by the one-parameter unipotent subgroups. 
%A point $x\in X$ is called \textit{flexible} if $T_xX$ is spanned by the tangent vectors to orbits of one-parameter unipotent subgroups of $\Aut(X)$, and $X$ is called \textit{flexible} if every $x$ at which $X$ is nonsingular is.  
%It was however proven around the same time in \cite{5auth} that flexibility, the transitivity of the action of the special automorphisms, as well as its infinite transitivity, are equivalent for algebraic varieties of dimension at least two. 
We say that a Stein manifold $X$ is \textit{holomorphically (volume) flexible} if the complete (volume-preserving) vector fields span the tangent space $T_xX$ at every $x\in X$ (see \cite[\S 6]{5auth}).
Clearly, a manifold $X$ is holomorphically (volume) flexible if one point $x\in X$ is, and $\Aut(X)$ (resp. $\Aut_\omega(X)$) acts transitively. 
Moreover, holomorphic (volume) flexibility implies the the transitive action of $\Aut(X)$ (resp. $\Aut_\omega(X)$) on $X$. 
%And conversely, see the flexibility survey (see also \cite[\S 3]{flex})., transitivity of the action of the automorphisms arising as time 1 maps of complete fields implies holomorphic flexilibity
%In \cite{AZK} it is proven that if an algebraic variety is flexible and the special automorphism group acts infinitely transitively, then the same holds for the suspension. 
\begin{lemma}\label{L5}
If $X$ is holomorphically flexible, then $\Aut(\XX)$  acts transitively. Moreover, if $X$ is holomorphically volume flexible at a point $x\in X$ and $\Aut_\omega(X)$ acts transitively, then $\Aut_{\bar\omega}(\XX)$ acts transitively.
\end{lemma}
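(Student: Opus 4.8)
The plan is to show that $\XX$ admits enough complete automorphisms to move any point to a fixed base point, by combining three ingredients: the lifts $\Theta_u,\Theta_v$ of complete fields from $X$ (constructed in Lemma \ref{l1}), the vertical $\C_+$-actions that act on the fiber coordinates $(u,v)$, and the $\C^*$-action $u\partial_u - v\partial_v$ which is visibly complete and volume-preserving on $\XX$. First I would reduce the statement: since $\Aut(\XX)$ contains the lifts of all of $\Aut(X)$ acting on the $x$-coordinate (any $\psi\in\Aut(X)$ with $f\circ\psi = f$ lifts trivially, and more usefully the flows $\Phi^t$ of $\Theta_u$ in Equation \ref{flowoflift} project to the flow $\phi^{tv}$ on $X$ when $v\neq 0$), it suffices to show that from an arbitrary $\bar x = (u,v,x)\in\XX$ one can reach a normal form, say a point with $x = x_0$ fixed and then a canonical choice of $(u,v)$ on the fiber $\{uv = f(x_0)\}$, which is either $\C^*$ (if $f(x_0)\neq 0$) or the nodal curve $\{uv=0\}$ (if $f(x_0)=0$).

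The core steps, in order: (1) \emph{Move off the bad locus.} Using the complete fields $g\tfrac{\partial}{\partial u}$ for $g\in\Ker$ of the projection... more precisely, observe that for $\Theta$ complete on $X$ the flow $\Phi^t$ of $\Theta_u$ given by Equation \ref{flowoflift} changes both $u$ and $x$; together with the analogous flow of $\Theta_v$ (changing $v$ and $x$) and the $\C^*$-flow $(u,v,x)\mapsto(e^s u, e^{-s}v, x)$, one checks that the orbit of any point already meets $\{v\neq 0\}$. (2) \emph{Project and use flexibility of $X$.} On $\{v\neq 0\}$, the map $(u,v,x)\mapsto x$ followed by the slice $u = f(x)/v$ identifies this set with $\C^*\times X$, and the lifts $\Theta_u$ (which on $v\neq 0$ induce the flow $x\mapsto\phi^{tv}(x)$, i.e. an arbitrary rescaling of the flow of $\Theta$) act on the $X$-factor through $\Aut(X)$. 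Since $X$ is holomorphically flexible, complete fields span every $T_xX$, so by the standard argument (flow a finite composition of such flows) the $x$-coordinate can be driven to $x_0$; here transitivity of $\Aut(X)$, which follows from flexibility, is what lets us reach \emph{every} $x_0$, not just nearby ones. (3) \emph{Normalize the fiber.} Once $x = x_0$ is achieved, the residual fiber $\{uv = f(x_0)\}\subset\C^2$ carries the $\C^*$-action and, when $f(x_0)=0$, also the $\C_+$-actions $u\partial_u$-type flows; a direct check shows these act transitively on this affine conic (it is homogeneous under $\Aut$), so any $(u,v)$ can be brought to a fixed value. For the volume-preserving statement, I would verify throughout that each field used is $\bar\omega$-divergence-free: the lifts are, by Lemma \ref{l1}; the field $g(x)(u\partial_u - v\partial_v)$ is divergence-free since $u\partial_u-v\partial_v$ is and $g(x)$ lies in its kernel; and the residual fiber $\C^*$ or $\{uv=0\}$ is homogeneous under its volume-preserving automorphism group. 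Flexibility \emph{at a point} $x$ plus transitivity of $\Aut_\omega(X)$ then suffices: flexibility at $x$ gives local movability near $x$, and $\Aut_\omega(X)$-transitivity spreads it to all of $X$, which is exactly the hypothesis as stated.

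The main obstacle I anticipate is step (2): controlling the $X$-coordinate while the fiber coordinates $(u,v)$ are simultaneously moving. The flow $\Phi^t$ of $\Theta_u$ moves $x$ by $\phi^{tv}$ but also shifts $u$ by $tg(x,tv)$, so one cannot independently prescribe the motion in $X$ and in the fiber; the resolution is to note that $v$ is preserved by $\Theta_u$ (and $u$ by $\Theta_v$), so on the locus $v\neq 0$ fixed one genuinely has a free action on the $X$-slice through the reparametrized flows, and the leftover motion in $u$ is harmless because it is later absorbed by the $\C^*$- and $\C_+$-normalization of step (3). A secondary subtlety is ensuring the orbit actually enters $\{v\neq 0\}$ starting from a point with $v=0$ (hence $u$ arbitrary and $f(x)=0$): there one uses a lift $\Theta_v$ with $\Theta(f)(x)\neq 0$ — possible because $X_0$ is reduced, so $df\not\equiv 0$ on $X_0$, and by flexibility some complete field is non-tangent to $X_0$ at the relevant point — which then makes $v$ move away from $0$ to first order. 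Once these two points are handled, the rest is the routine "flow along finitely many complete fields" argument, and the volume-preserving refinement is just bookkeeping of divergences already done in Lemma \ref{l1}.
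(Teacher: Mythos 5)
Your argument is correct and follows essentially the same route as the paper: the flows of Equation \ref{flowoflift} for the lifts $\Theta_u,\Theta_v$ together with the complete $\bar\omega$-preserving field $u\partial_u-v\partial_v$ connect all points with $uv\neq 0$, and a point $(0,0,x)$ is moved off the degenerate locus by lifting a complete field $\Theta$ with $d_xf(\Theta(x))\neq 0$, exactly as in the paper's final step. Two small remarks: your normalization to a fixed fiber over $x_0$ (best chosen with $f(x_0)\neq 0$, so the fiber is $\C^*$ and the $\C^*$-action alone finishes the job) neatly avoids the paper's Picard-theorem detour about $f$ possibly omitting a value, but in step (3) note that $\partial_u$ is \emph{not} tangent to $\XX$, so the translations along the branch $\{v=0\}$ of a nodal fiber must be realized as restrictions of the lifts $\Theta_u$ --- which is what your closing paragraph in effect does.
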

\begin{proof}
For simplicity we prove the first statement: the second is proven in an exactly analogous manner. Let $\bar{x}_0=(u_0,v_0,x_0)\in\XX$ with $u_0v_0\neq 0$, and let us determine the orbit of $ \bar{x}_0$ under $ \Aut(\XX)$.
Given $\Theta\in\VF(X)$, by  Equation \ref{flowoflift} we have, for each $t$, an automorphism of $\XX$ of the form
\begin{equation}\label{orbit}
(u,v,x)\mapsto (u+tg(x,tv),v,\phi^{tv}(x)).
\end{equation}
The orbit of $\bar{x}_0$ must hence contain the hypersurface $\{ v=v_0 \}\subset\XX$ (because $\Aut(X)$ acts transitively on $X$), and analogously, since $u_0\neq 0$, the orbit contains $\{u=u_0\}\subset\XX$. 
Let $(u_1,v_1,x_1)\in \XX$ be another point with $u_1v_1\neq 0$. 
Note that the nonconstant function $f:X\to\C$ can omit at most one value $\xi$. 
Indeed, by flexibility  there is a complete vector field which at $x_0$ points in a direction where $f$ is not constant; 
precomposition with its flow map at $x_0$ gives an entire function which must omit at most one value.
Of course $\xi$ cannot be $0$, and by definition neither $u_0v_0$ nor $u_1v_1$.
Follow the orbit of $\bar x_0$ along the hypersurface $\{ u=u_0 \}\cap \XX$ until  $(u_0,v_1,x')$, then along  $\{ v=v_1 \}\cap \XX$ until $(u_1,v_1,x_1)$ (if $\xi=u_0v_1$ replace $v_1$ by $2v_1$).
%Follow the orbit in $\XX$ starting at $\bar x_0$ along the hypersurface $\{ u=u_0 \}\cap \XX$ until reaching $(u_0,v_1,x')$ (or $(u_0,2v_1,x')$ if $\xi=u_0v_1$);
%then by applying the same reasoning, the orbit continues along the hypersurface $\{ v=v_1 \}\cap \XX$ until $(u_1,v_1,x_1)$ (or along $v=2v_1$ until $(u_1,2v_1,x'')$ and then to $(u_1,v_1,x_1)$).
%This shows that all points $(u,v,x)\in\XX$ with $uv\neq 0$ are contained in the orbit of $\bar{x}_0$; 
So the orbit contains all points $(u,v,x)\in\XX$ with $uv\neq 0$ and by Equation \ref{orbit} also those with either $u$  or $v$ nonzero.
Consider now a point of the form $(0,0,x_0)\in\XX$. Since $x_0\in X_0$ and $X_0$ is reduced, $d_{x_0}f\neq 0$, so there is a tangent vector evaluating to a nonzero number, which since $X$ is flexible can be taken to be of the form $\Theta(x_0)$ for  a complete field $\Theta$. 
By lifting $\Theta$ we obtain an automorphism of $\XX$ of the form $(u,v,x)\mapsto (g(0,x_0),0,x_0)$. Since \[
g(0,x_0)=\lim_{t\to 0}\frac{f(\phi^t(x_0))-f(\phi^0(x_0))}{t}=(f\circ \phi)'(0)=d_{x_0}f(\Theta(x_0))\neq 0,
\]
this automorphism moves $(0,0,x_0)$ to a point of nonzero $u$ coordinate, and we are done.
\end{proof}
In particular, by the Andersén-Lempert theorem (see e.g. \cite[\S 2.B]{KK-state}), the assumptions hold if $X$ has the $\omega$-VDP and is of dimension $n\geq 2$.
\section{Examples or Applications}\label{sec:last}
The following theorem summarizes the previous discussion and gives conditions under which the suspension over a manifold has a VDP.
\begin{theorem}\label{th:4.1}
Let $X$ be a Stein manifold of dimension $n\geq 2$ such that $H^n(X)=H^{n-1}(X)=0$.
Let $\omega$ be a volume form on $X$ and suppose that %$X$ is holomorphically volume flexible at a point $x\in X$ and 
$\Aut_\omega(X)$ acts transitively.
%(in particular this holds if $X$ has the $\omega$-VDP, or if $X$ is holomorphically volume flexible)
Assume that there is a finite collection $S$ of semi-compatible pairs $(\alpha,\beta)$ of volume-preserving vector fields such that for some $x_0\in X$, $\{\alpha(x_0)\wedge\beta(x_0);(\alpha,\beta)\in S\}$ spans $\wedge^2T_{x_0}X$.
Let $f:X\to \C$ be a nonconstant holomorphic function with smooth reduced zero fiber $X_0$ and $\tilde{H}^{n-2}(X_0)=0$. 
Then the suspension $\XX\subset \C^2_{u,v}\times X$ of $X$ along $f$ has the VDP with respect to a natural volume form $\bar{\omega}$ satisfying
$d(uv-f)\wedge\bar{\omega}=(du\wedge dv \wedge \omega)|_{\XX}$.
\end{theorem}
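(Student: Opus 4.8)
The plan is to verify, in order, each hypothesis required by Proposition \ref{crit} applied to the suspension $\XX$ with the form $\bar\omega$, feeding into it the outputs of the lemmas and theorems already proved in Sections \ref{sec:first}--\ref{sec:second}. Recall $\dim\XX=n+1$, so the cohomological hypothesis of Proposition \ref{crit} concerns $H^n(\XX)$, and the generation hypothesis concerns $\wedge^2 T_{\bar x}\XX$. First I would note that $\XX$ is Stein of dimension $n+1$ (stated in Section \ref{sec:second}), and that $\bar\omega$ is a genuine volume form on it, so the setup of Proposition \ref{crit} applies.

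\emph{Step 1: the generation condition.} By Lemma \ref{l1}, if $\Theta$ is $\omega$-volume-preserving and complete on $X$, then the lifts $\Theta_u,\Theta_v$ are complete and $\bar\omega$-volume-preserving on $\XX$; by Lemma \ref{l2}, semi-compatibility of $(\nu,\mu)$ on $X$ lifts to semi-compatibility of $(\nu_u,\mu_v)$ and $(\nu_v,\mu_u)$ on $\XX$. Next I would apply Theorem \ref{Jo} to the collection $S$: its hypothesis on $S$ (finitely many semi-compatible pairs of volume-preserving fields whose wedges span $\wedge^2 T_{x_0}X$ at one point) is precisely what Theorem \ref{th:4.1} assumes, and its transitivity hypothesis $\Aut_{\bar\omega}(\XX)$ acting transitively is supplied by Lemma \ref{L5}, whose own hypotheses (holomorphic volume flexibility of $X$ at a point plus transitivity of $\Aut_\omega(X)$) follow from the assumptions of Theorem \ref{th:4.1}: transitivity of $\Aut_\omega(X)$ is assumed outright, and volume flexibility at a point is implied by the spanning condition in $S$, since $\alpha(x_0)\wedge\beta(x_0)$ spanning $\wedge^2 T_{x_0}X$ forces the $\alpha(x_0),\beta(x_0)$ to span $T_{x_0}X$. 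Theorem \ref{Jo} then yields a finite collection $\overline S$ of semi-compatible volume-preserving pairs $(A_j,B_j)$ on $\XX$ with ideals $I_j$ such that $\Span\{I_j(\bar x)A_j(\bar x)\wedge B_j(\bar x)\}_j=\wedge^2 T_{\bar x}\XX$ for every $\bar x\in\XX$ — exactly the generation hypothesis of Proposition \ref{crit}.

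\emph{Step 2: the cohomological condition.} I must show every class in $H^n(\XX)$ contains an element of $\overline{\Phi(\CVF_{\bar\omega}(\XX))}$. Here I would invoke Proposition \ref{P2}: the hypotheses $H^n(X)=H^{n-1}(X)=0$ and $\tilde H^{n-2}(X_0)=0$ are assumed in Theorem \ref{th:4.1}, so Proposition \ref{P2} gives $H^n(\XX)=0$. The only class is then zero, which is represented by the zero form, and $0\in\Phi(\CVF_{\bar\omega}(\XX))$ since the trivial field is complete and volume-preserving. So the cohomological condition is satisfied vacuously.

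\emph{Step 3: conclude.} With both hypotheses of Proposition \ref{crit} verified for $(\XX,\bar\omega)$, the proposition yields that $\XX$ has the $\bar\omega$-VDP. The main obstacle in the argument is not in this final assembly — which is essentially bookkeeping — but lies upstream, in Theorem \ref{Jo}: producing a lifted family $\overline S$ whose wedges generate $\wedge^2 T_{\bar x}\XX$ \emph{everywhere} (not just at one point) requires both the delicate splitting/explicit flow computations at the chosen base point $\bar x_0$ and the Baire-category spreading argument to remove the exceptional variety; and feeding it correctly requires checking that the flexibility and transitivity hypotheses of Lemmas \ref{L5} and \ref{l1}--\ref{l2} are genuinely implied by the hypotheses of Theorem \ref{th:4.1}, in particular that the one-point spanning condition on $S$ upgrades (via $\Aut_\omega(X)$-transitivity) to the global flexibility that Theorem \ref{Jo} and Lemma \ref{L5} want. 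I would make that last implication explicit before citing the theorems.
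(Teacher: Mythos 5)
Your proposal is correct and follows essentially the same route as the paper: deduce volume flexibility at $x_0$ from the one-point spanning condition, apply Lemma \ref{L5} to get transitivity of $\Aut_{\bar\omega}(\XX)$, feed this into Theorem \ref{Jo} for the generation condition, use Proposition \ref{P2} to make the cohomological hypothesis of Proposition \ref{crit} vacuous, and conclude. The paper's actual proof is just a terser version of your three steps.
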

\begin{proof}
The spanning condition on $\wedge^2 TX$ implies holomorphic volume flexibility at $x_0$. So by Lemma \ref{L5}, $\Aut_{\bar\omega}(\XX)$ acts transitively,
%Furthermore, since $\Aut_\omega(X)$ acts transitively, the beginning of the proof of Theorem \ref{Jo} shows that $\{\alpha(x)\wedge\beta(x);(\alpha,\beta)\in S\}$ spans $\wedge^2T_{x}X$ for all $x\in X$.
and therefore Theorem \ref{Jo} may be applied.
By assumption and Proposition \ref{P2}, the topological condition of Proposition \ref{crit} is also trivially satisfied.
\end{proof}
\begin{corollary}
Let $n\geq 1$ and $f\in\hol(\C^n)$ be a nonconstant holomorphic function with smooth reduced zero fiber $X_0$, such that $\tilde{H}^{n-2}(X_0)=0$ if $n\geq 2$.
Then the hypersurface $\overline{\C^n_f}=\{uv=f(z_1,\dots,z_n)\}\subset\C^{n+2}$ has the volume density property with respect to the form $\bar{\omega}$ satisfying $d(uv-f)\wedge\bar{\omega}=du\wedge dv \wedge dz_1\wedge\dots\wedge dz_n$.
\end{corollary}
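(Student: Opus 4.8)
The plan is to derive the corollary directly from Theorem \ref{th:4.1} by verifying its hypotheses for $X=\C^n$. For $n\geq 2$ this is mostly a matter of checking that $\C^n$ satisfies the topological and algebraic conditions required; the case $n=1$ must be handled separately since Theorem \ref{th:4.1} assumes $n\geq 2$.

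First I would dispose of the topology: since $\C^n$ is contractible, $H^n(\C^n)=H^{n-1}(\C^n)=0$ holds trivially, and the hypothesis $\tilde H^{n-2}(X_0)=0$ is exactly what is assumed in the statement. Next I would produce the volume form and the transitivity: take $\omega=dz_1\wedge\dots\wedge dz_n$, the standard form. Then $\Aut_\omega(\C^n)$ acts transitively---indeed translations are volume-preserving---so that hypothesis is immediate. The remaining and only substantive point is to exhibit a finite collection $S$ of semi-compatible pairs of volume-preserving vector fields on $\C^n$ whose wedges span $\wedge^2 T_{x_0}\C^n$ at some point. Here one uses the standard shear and overshear fields: for $i\neq j$ the fields $z_i\partial_{z_j}$ and (for distinct indices) $\partial_{z_j}$ are complete and divergence-free, and pairs such as $(\partial_{z_i},\partial_{z_j})$ are semi-compatible (their kernels contain all functions not depending on $z_i$, resp. $z_j$, and the product of these spans enough of $\hol(\C^n)$ to contain a nonzero ideal---in fact the whole ring for suitable combinations). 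Since $\partial_{z_i}(x_0)\wedge\partial_{z_j}(x_0)$ over all $i<j$ already spans $\wedge^2 T_{x_0}\C^n$, the spanning condition (\ref{span}) is satisfied at any $x_0$. This is essentially the content of $\C^n$ having the volume density property, which is classical (Andersén \cite{A90}); one may simply cite that the standard fields furnish such a collection $S$. Applying Theorem \ref{th:4.1} then yields the VDP of $\overline{\C^n_f}$ for $n\geq 2$ with the asserted form $\bar\omega$.

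For $n=1$ the manifold $\overline{\C_f}=\{uv=f(z)\}\subset\C^3$ is a Danielewski surface. Here I would invoke directly the criterion of Proposition \ref{crit}, or note that this case is already covered in the literature: the surfaces $\{uv=f(z)\}$ with $f$ having simple zeros are known to have the volume density property (see \cite{KK-volume}, \cite{KK-Corea}). Alternatively, one can run the suspension machinery of Section \ref{sec:second} by hand: the lifts $\Theta_u,\Theta_v$ of the complete field $\Theta=\partial_z$ on $\C$, together with the field $u\partial_u-v\partial_v$, are complete and $\bar\omega$-divergence-free, and a short computation shows they form semi-compatible pairs spanning $\wedge^2 T\overline{\C_f}$ (which is a line bundle here, so only a single nonvanishing section is needed at each point); the topological hypothesis $H^1(\overline{\C_f})=0$ follows because $\overline{\C_f}\setminus\{u=0\}\cong\C^*\times\C$ and $f$ has only finitely many, simple, zeros.

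The main obstacle I anticipate is not the topology but the bookkeeping needed to verify semi-compatibility of the chosen pairs on $\C^n$ and that their associated ideals $I_j$ are large enough (nonzero, and ideally generating at the chosen point) so that the spanning condition (\ref{spans}) of the criterion, not merely (\ref{span}), is met---though Theorem \ref{Jo} is designed precisely to upgrade (\ref{span}) to (\ref{spans}), so once $S$ is in hand this is automatic. Thus the real work is simply assembling the standard complete volume-preserving fields on $\C^n$ into pairs satisfying the semi-compatibility definition, which is routine and well documented; the only genuinely separate argument is the low-dimensional case $n=1$, handled as above.
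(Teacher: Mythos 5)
Your treatment of the case $n\geq 2$ is correct and is exactly the paper's argument: apply Theorem \ref{th:4.1} to $X=\C^n$ with $\omega=dz_1\wedge\dots\wedge dz_n$, noting that the pairs $(\partial_{z_i},\partial_{z_j})$, $i\neq j$, are semi-compatible (the closed span of $\Ker\partial_{z_i}\cdot\Ker\partial_{z_j}$ is all of $\hol(\C^n)$) and their wedges span $\wedge^2T_{x_0}\C^n$, while the topological hypotheses are trivial for $\C^n$ and assumed for $X_0$.

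The gap is in the case $n=1$, in both of the routes you offer. First, the references \cite{KK-volume}, \cite{KK-Corea} establish the VDP for Danielewski surfaces $\{uv=p(z)\}$ only for $p$ a \emph{polynomial} with simple roots, whereas the corollary allows an arbitrary nonconstant holomorphic $f$ (e.g.\ $f(z)=\sin z$, which has infinitely many zeros; your parenthetical claim that $f$ has only finitely many zeros is false). A citation alone therefore does not suffice: one needs the reduction step the paper sketches, namely that since only approximation on a compact set is required, the explicit Lie-bracket computations of \cite{KK-Zeit} with the complete fields $\Theta_u$, $\Theta_v$ and $h(u\partial_u-v\partial_v)$ carry over to holomorphic $f$. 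Second, your proposed ``by hand'' route through the suspension machinery does not work as stated. Lemma \ref{l2} produces semi-compatible pairs on $\XX$ only from semi-compatible pairs on the base, and on $\C$ there are none: the kernel of any nontrivial complete field on $\C$ consists of the constants, so $\overline{\Span(\Ker\nu\cdot\Ker\mu)}=\C$ contains no nonzero ideal. Nor is $(\Theta_u,\Theta_v)$ semi-compatible directly on $\overline{\C_f}$ in general: one computes $\Ker\Theta_u=\{h(v)\}$ and $\Ker\Theta_v=\{h(u)\}$, so the closed span of the products is contained in the closure of $\hol(\C^2_{u,v})$ restricted to the surface; these functions are constant on the fibers of $(u,v)$, and whenever $f$ is not injective an ideal contained in this set would be killed by multiplication by $z$, so no nonzero ideal exists. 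This is why the paper abandons the semi-compatibility criterion for $n=1$ and argues via explicit Lie brackets instead; your proposal needs the same separate argument here.
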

\begin{proof}
If $n\geq 2$ this follows immediately from the previous theorem, since in $\C^n$ the standards derivations $\partial_{z_j}$ generate $\wedge^2 TX$.
If $n=1$, there are no semi-compatible pairs on $\C$, but it is possible to show the VDP directly. 
Given $\Theta\in\VF_\omega(\overline{\C_f})$ and a compact $K$ of $\overline{\C_f}$, we must find a finite Lie combination of volume-preserving fields approximating $\Theta$ on $K$.
Because of this approximation, we can reduce to the algebraic case, which is treated  in \cite{KK-Zeit} by means of explicit calculation of Lie brackets of the known complete fields $\Theta_u,\Theta_v$, and $h(u\partial_u-v\partial_v)$.
\end{proof}
%In the case of $X$ having dimension $n=1$... known for polynomials for $X=\C$, see \cite[Thm 3]{KK-volume}, check. DP known for polynomial and $X=\C$, \cite{KK-Zeit}.
%For a similar result concerning the DP without volume, see.

Let $\phi:\C^{n-1}\to\C^{n}$ be a proper holomorphic
%non-straightenable
embedding, and consider the closed subset $Z=\phi(\C^{n-1})\subset \C^{n}$.
It is a standard result that every multiplicative Cousin distribution in $\C^n$ is solvable, since $H^2(\C^n,\mathbb{Z})=0$. This implies that the divisor associated to $Z$ is principal: in other words, there exists
a  holomorphic function $f$ on $\C^{n}$ vanishing precisely on $Z$ and such that $df\neq 0$ on $Z$. We may therefore consider the suspension $\overline{\C^n_f}$ of $\C^{n}$ along $f$, which according to the above corollary must have the volume density property.
The significance of this lies in the existence of non-straightenable embeddings. Recall that a proper holomorphic embedding $\phi:\C^{k}\hookrightarrow\C^n$ is said to be \textit{holomorphically straightenable} 
if there exists an automorphism $\alpha$ of $\C^n$ such that $\alpha(\phi(\C^k))=\C^k\times \{0\}^{n-k}$. 
The existence of non-tame sets in $\C^n$, combined with an interpolation theorem, implies that there exists
for each $k<n$ non-straightenable proper holomorphic embeddings $\phi:\C^{k}\hookrightarrow\C^n$, see 
%\cite{FGR} for $n=2$, and more generally 
\cite{Finterpol}. Note that proper algebraic embeddings are the holomorphic analogue of polynomial embeddings,
and that the ``classical'' algebraic situation is in sharp contrast to the holomorphic one: 
%a famous result of Abhyankar and Moh \cite{AM} states that if $k=1$ and $n=2$, then every polynomial embedding is algebraically straightenable, 
%generalizing a famous theorem of Abhyankar and Moh, it has been shown that 
for every $n>2k+1$ polynomial embeddings $\phi:\C^{k}\hookrightarrow\C^n$ are algebraically straightenable (see \cite{Kaliman-AM}),  the case of real codimension $2$ remaining notoriously open.
%$n=3,k=1$ remains notoriously open,
%as well as the more general Abhyankar-Sathaye conjecture: every polynomial embedding $\C^{n-1}\hookrightarrow\C^n$ is algebraically straightenable.

If the embedding $\phi$ is straightenable, it is clear that $\overline{\C^n_f}$ is trivially biholomorphic to $\C^{n+1}$, and a calculation shows that the form $\bar\omega$ is the standard one.
So the result says something new only if $\phi$ is  non-straightenable.
Indeed, it is unknown whether $\overline{\C^n_f}$ is biholomorphic to $\C^{n+1}$.
However, $\overline{\C^n_f}\times\C$ is biholomorphic to $\C^{n+2}$ (see \cite{Derksen-Kut}),
and is therefore a potential counterexample to the holomorphic version of the important Zariski Cancellation Problem:
if $X$ is a complex manifold of dimension $n$ and $X\times\C$ biholomorphic to $\C^{n+1}$, does it follow  that $X$ is biholomorphic to $\C^n$?

Moreover, $\overline{\C^n_f}$ is diffeomorphic to complex affine space. This is best shown in the algebraic language of modifications, as follows. Given a triple $(X,D,C)$ consisting of a Stein manifold $X$, a smooth reduced analytic divisor $D$, and a proper closed complex submanifold $C$ of $D$,
it is possible construct the pseudo-affine modification of $X$ along $D$ with center $C$, denoted $\XX$. It is the result of blowing up $X$ along $C$ and deleting the proper transform of $D$.
We refer the interested reader to \cite{KZaffine} for a general discussion.
In our situation we take $X=\C^n\times \C_u$, $D=\C^n\times \{0 \}$, and $C=Z\times \{ 0 \}=\phi(\C^{n-1})\times \{0\}$: it can be shown that in this case $\XX$ is biholomorphic to $\overline{\C^n_f}$ (see \cite{KZaffine}).
We now invoke a general result giving sufficient conditions for a pseudo-affine modification to be diffeomorphic to affine space: since $Z$ is contractible, Proposition 5.10 from \cite{KK-Zeit} is directly applicable, and therefore the following holds:
\begin{corollary}
If $\phi:\C^{n-1}\to\C^{n}$ is a proper holomorphic embedding, then the suspension $\overline{\C^n_f}$ along the function $f$ defining the subvariety $\phi(\C^{n-1})$, is diffeomorphic to $\C^{n+1}$
and has the volume density property with respect to a natural volume form $\bar\omega$. Moreover $\overline{\C^n_f}\times \C$ is biholomorphic to $\C^{n+2}$, and is therefore a potential counterexample to the Zariski Cancellation Problem if $\phi$ is not straightenable.
\end{corollary}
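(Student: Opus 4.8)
The plan is to assemble the corollary from the general machinery already in place, treating separately the three assertions: the volume density property, the diffeomorphism type, and the cancellation consequence. First I would set up the function $f$: since $Z=\phi(\C^{n-1})$ is a smooth closed hypersurface in $\C^n$ and $H^2(\C^n,\mathbb{Z})=0$, the associated divisor is principal, so there is $f\in\hol(\C^n)$ vanishing exactly on $Z$ with $df\neq 0$ along $Z$; thus $f$ is nonconstant with smooth reduced zero fiber $X_0=Z$. For $n\geq 2$ one then checks the hypothesis $\tilde H^{n-2}(X_0)=0$: this is immediate because $X_0\cong\C^{n-1}$ is contractible. Now apply the previous corollary (equivalently Theorem \ref{th:4.1}): in $\C^n$ the standard derivations $\partial_{z_j}$ are complete, volume-preserving, pairwise semi-compatible (their kernels are the coordinate subalgebras), and the wedges $\partial_{z_i}\wedge\partial_{z_j}$ span $\wedge^2 T_x\C^n$ everywhere, while $H^n(\C^n)=H^{n-1}(\C^n)=0$ and $\Aut_\omega(\C^n)$ acts transitively. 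Hence $\overline{\C^n_f}$ has the $\bar\omega$-VDP. For $n=1$ one falls back on the direct argument of the previous corollary. This disposes of the first claim.

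For the diffeomorphism statement I would pass to the language of pseudo-affine modifications. The point is that $\overline{\C^n_f}$ is biholomorphic to the pseudo-affine modification of $X=\C^n\times\C_u$ along the divisor $D=\C^n\times\{0\}$ with center $C=Z\times\{0\}$; this identification is standard and is carried out in \cite{KZaffine}. Then Proposition 5.10 of \cite{KK-Zeit} gives sufficient conditions — essentially contractibility of the center $C$ inside $D$ together with the ambient $X$ being $\C^{N}$ — for such a modification to be diffeomorphic to affine space of the appropriate dimension. Since $C\cong Z\cong\C^{n-1}$ is contractible and sits as a closed submanifold of $D\cong\C^n$, and $X=\C^{n+1}$, that proposition applies verbatim and yields that $\overline{\C^n_f}$ is diffeomorphic to $\C^{n+1}$.

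For the cancellation assertion I would invoke the result of Derksen–Kutzschebauch \cite{Derksen-Kut} (or the cancellation-type phenomenon for such suspensions) to the effect that $\overline{\C^n_f}\times\C$ is biholomorphic to $\C^{n+2}$; this is where the hypothesis that $f$ defines a proper embedded copy of $\C^{n-1}$ is used, since the argument there stabilizes the embedding. Combining the three points: $\overline{\C^n_f}$ is a Stein manifold, diffeomorphic to $\C^{n+1}$, with the VDP, whose product with a line is $\C^{n+2}$; if $\phi$ were straightenable, $\overline{\C^n_f}$ would be biholomorphic to $\C^{n+1}$ and nothing new would be obtained, but if $\phi$ is not straightenable it is not known to be $\C^{n+1}$, hence it is a \emph{potential} counterexample to the holomorphic Zariski Cancellation Problem.

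The main obstacle — really the only nonroutine input — is the identification of $\overline{\C^n_f}$ with the stated pseudo-affine modification and the verification that the hypotheses of Proposition 5.10 of \cite{KK-Zeit} are met in a form that delivers the diffeomorphism with $\C^{n+1}$ rather than merely a weaker homotopy statement; once that black box is invoked correctly, everything else is a matter of quoting the earlier corollary and the cancellation result. A secondary point requiring a word of care is that the volume form $\bar\omega$ produced by the construction is the one normalized by $d(uv-f)\wedge\bar\omega=du\wedge dv\wedge dz_1\wedge\cdots\wedge dz_n$, and that when $\phi$ happens to be straightenable this $\bar\omega$ is biholomorphic to the standard form on $\C^{n+1}$, so the corollary is genuinely about the non-straightenable case.
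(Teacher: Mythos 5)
Your proposal is correct and follows essentially the same route as the paper: the function $f$ is obtained from the solvability of the Cousin II problem on $\C^n$, the VDP comes from the preceding corollary (Theorem \ref{th:4.1} applied with the standard derivations $\partial_{z_j}$), the diffeomorphism type from the identification of $\overline{\C^n_f}$ with the pseudo-affine modification of $\C^n\times\C_u$ along $\C^n\times\{0\}$ with contractible center $\phi(\C^{n-1})\times\{0\}$ via Proposition 5.10 of \cite{KK-Zeit}, and the product statement from \cite{Derksen-Kut}. No gaps.
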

Recall a conjecture of A. T\'{o}th and Varolin \cite{Toth-Var} asking whether a complex manifold which is diffeomorphic to $\C^n$ and has the density property must be biholomorphic to $\C^n$. 
%It has already been known (see \cite{KK-Zeit}) that $\overline{\C^n_f}$ has the density property, and is therefore a candidate counterexample to this conjecture. 
It is also unknown whether there are contractible Stein manifolds with the volume density property which are not biholomorphic to $\C^n$, and our construction provides a new potential counterexample. As pointed out in Section \ref{sec:intro}, this is to our knowledge the first non-algebraic one.

To conclude, we give another example of an application.
Consider a proper holomorphic embedding $\mathbb{D}\hookrightarrow\C^2_{x,y}$ (that this exists is a classical theorem of K. Kasahara and T. Nishino, see e.g. \cite{Stehle}), and let $f$ generate the ideal of functions vanishing on the embedded disk, as above. 
% of embedd a Ramanujam surface in C3 - should be possible by...?
Then $M=\overline{\C^2_f}\subset\C^2_{u,v}\times\C^2_{x,y}$ admits a $\C^*$-action, namely
\[
\lambda\mapsto(\lambda u,\lambda^{-1}v, x,y),
\]
whose fixed point set is biholomorphic to $\mathbb{D}$.
Therefore, the action cannot be linearizable, i.e., there is no holomorphic change of coordinates after which the action is linear. 
Recall the problem of linearization of holomorphic $\C^*$-actions on $\C^k$ (see e.g. \cite{Derksen-Kut}): for $k=2$, every action is linearizable; there are counterexamples for $k\geq 4$; and the problem remains open for $k=3$.
If $M$ is biholomorphic to $\C^3$, there would be a negative answer. Otherwise, it resolves in the negative the T\'{o}th-Varolin conjecture mentionned above.
By a result of J. Globevnik \cite{Globevnik}, it is also possible to embed arbitrary small perturbations of a polydisc in $\C^n$ for any $n\geq 1$ into $\C^{n+1}$; by the same argument, we obtain for any $n\geq 3$, non-algebraic manifolds that are diffeomorphic to $\C^{n}$ with the volume density property.

%A Ramanujam surface $\mathcal{R}$ is a contractible affine surface which is not homeomorphic to $\mathbb{R}^4$, but such that $\mathcal{R}\times \C$ is an ``exotic'' $\C^3$.
%meaning an algebraic manifold diffeomorphic, but not algebraically isomorphic, to $\C^3$. 
%C.P. Ramanujam constructed such a surface explicitely in \cite{Ramanujam}, and many others have been obtained (see ?).
%Arguing as above, we can (when?) embed $\mathcal{R}$ in $\C^3$ and consider the suspension along a defining function of the embedded surface. Since $\mathcal{R}$ is affine, it is of finite type, so again Proposition 5.10 from \cite{KK-Zeit} we obtain an interesting example of a Stein manifold diffeomorphic to $\C^4$,
%with the VDP and a non-linearizable $\C^*$-action.
\newcommand{\etalchar}[1]{$^{#1}$}
\providecommand{\bysame}{\leavevmode\hbox to3em{\hrulefill}\thinspace}
\providecommand{\MR}{\relax\ifhmode\unskip\space\fi MR }
% \MRhref is called by the amsart/book/proc definition of \MR.
\providecommand{\MRhref}[2]{%
  \href{http://www.ams.org/mathscinet-getitem?mr=#1}{#2}
}
\providecommand{\href}[2]{#2}

\end{document}